\documentclass[12pt,a4paper,reqno]{amsart}
\usepackage[T2A]{fontenc}

\usepackage{hyphenat}

\usepackage[english]{babel}

\usepackage{xcolor}
\usepackage{hyperref}

\usepackage{amsmath, amssymb, amsthm, mathtools, thmtools, mathdots, graphicx}
\usepackage{tikz}
\usepackage{multicol}
\usepackage[shortlabels]{enumitem}

\definecolor{linkcolor}{HTML}{970505}
\definecolor{urlcolor}{HTML}{799B03} 
\definecolor{citecolor}{HTML}{799B03}

\hypersetup{pdfstartview=FitH,  linkcolor=linkcolor,urlcolor=urlcolor, citecolor = citecolor,  colorlinks=true }

\DeclareMathOperator{\grad}{grad}

\usepackage{graphicx}
\usepackage[all]{xy}
\usepackage[left=3cm,right=3cm,top=3cm,bottom=3cm,bindingoffset=0cm]{geometry}
1

\usepackage{subfig}

\theoremstyle{definition}
\newtheorem{theorem}{Theorem}[section]
\newtheorem{proposition}[theorem]{Proposition}
\newtheorem{lemma}[theorem]{Lemma}
\newtheorem{corollary}[theorem]{Corollary}

\theoremstyle{definition}
\newtheorem{example}[theorem]{Example}
\newtheorem{remark}[theorem]{Remark}

\theoremstyle{definition}
\newtheorem{definition}[theorem]{Definition}
\usepackage{import}
\usepackage{xifthen}
\usepackage{pdfpages}
\usepackage{transparent}

\begin{document}

\title{On Courant-like bound for Neumann domain count}

\author[1]{Aleksei Kislitsyn}

\address{Department of Higher Geometry and Topology, Faculty
of Mathematics and Mechanics, Moscow State University, Leninskie Gory,
GSP-1, 119991, Moscow, Russia}
\address{Independent University of Moscow, Bolshoy
Vlasyevskiy Pereulok 11, 119002, Moscow, Russia}
\email{aleksei.kislitcyn@math.msu.ru}

\begin{abstract}
    In this work we show that in general there is no Courant-like bound  for Neumann domain count. In order to do that we construct a sequence of domains $\Omega^n$ such that the first Dirichlet eigenfunction for $\Omega^n$ has at least $n$ Neumann domains. Also a special case of convex domains is considered and sufficient conditions for existence of Courant-like bound for small eigenvalues are found. 
\end{abstract}

\maketitle

\section{Introduction}

Let us consider the Dirichlet eigenvalue problem for the Laplace operator $\Delta := \frac{\partial^2}{\partial x^2} + \frac{\partial^2}{\partial y^2}$ in a bounded domain $\Omega$,

	\begin{equation}
    \label{pr1}
 \begin{cases}
   -\Delta u = \lambda u,
   \\
   u|_{\Gamma^D} = 0,

 \end{cases}
\end{equation}
here $\Gamma^D = \partial \Omega$. It is well know that this problem has the discrete spectrum  $0 \leq \lambda_1(\Omega) \leq \lambda_2(\Omega) \leq ... \leq \lambda_k(\Omega) \leq ...,$ see e. g. \cite{LMP}. By $u_k(\Omega)$ we denote an eigenfunction corresponding to the eigenvalue   $\lambda_k(\Omega)$. Also we write just $\lambda_k$ and $u_k$ instead of  $\lambda_k(\Omega)$ and $u_k(\Omega)$ if the domain is fixed.

Nodal geometry is a classical branch of spectral geometry. It studies properties of the partition into nodal domains (see Definition \ref{def_nod_line} below). Courant nodal domain theorem is a very important result in nodal geometry. It states that the function $u_k$ has at most $k$ nodal domains. For more information on nodal geometry, see e.g. \cite[\S 4]{LMP}.

On the boundary of each nodal domain eigenfunction of the problem (\ref{pr1}) is equal to zero, so it satisfies the Dirichlet condition. Therefore, a natural question arises: is it possible to similarly “cut” a region into subregions so that on the boundary of each subregion the function satisfies the Neumann condition? That leads to a concept of Neumann domains. The study of this concept was started in  \cite{MR3087922, MR3151083}. 

Let us remark that there is no general definition of the Neumann domains for eigenfunctions of the problem (\ref{pr1}). The Neumann domains are  defined in special cases, when eigenfunctions are Morse functions in \cite{MR3535866} and  when eigenfunctions are analytic functions in \cite{anoop2024neumanndomainsplanaranalytic}. The definition of the Neumann domains is quite long, so we give it below, see Definition \ref{def_nd}. In papers \cite{MR3535866, MR4244877, MR4668090} some properties of Neumann domains are studied in the Morse case.

In this work we study an existence of Courant-like bounds for a Neumann domain count  in analytic case. By the  Courant-like bound we understand an estimate of the number of the  Neumann domains in terms of an eigenvalue number. This question was formulated in  \cite{MR3535866}. We give a negative answer to it. Let us formulate the following auxiliary result.

\begin{theorem}

\label{th_uniq}
    Let $u$ be an analytic solution of the problem (\ref{pr1}) and $\Omega_i$ be a Neumann domain of the function $u$. Let $x_1, \, x_2 \in \overline{\Omega}_i$ be local maxima of the function $u$. Then there exists a curve of local maxima $\theta$ such that $x_1, \, x_2 \in \theta$.

\end{theorem}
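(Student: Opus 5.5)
We have to reconstruct the intended definition of Neumann domains in the analytic case, following \cite{anoop2024neumanndomainsplanaranalytic}. There, critical points of an analytic eigenfunction need not be isolated: local maxima may form analytic curves. The natural setup is as follows. Let $\mathcal M$ be the set of local maxima of $u$ and $\mathcal m$ the set of local minima. Each connected component of $\mathcal M$ is either a point or an analytic curve; we call the latter a ``curve of local maxima'', and a single point counts as a degenerate one. For a point $x$ off the critical set, consider the gradient flow line through $x$. As $t\to+\infty$ the line accumulates on a connected component of $\mathcal M$, and as $t\to-\infty$ on a component of $\mathcal m$. A Neumann domain is a connected component of the union of gradient lines that share the same pair of limiting components, say $(\theta,\eta)$ with $\theta\subset\mathcal M$ and $\eta\subset\mathcal m$. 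The statement then says that $\overline{\Omega}_i$ contains only one maximal component, and I would prove it from exactly this structure.

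First I will check that the $\omega$-limit of any gradient line is a single connected component of $\mathcal M$. This uses the Łojasiewicz gradient inequality for the analytic function $u$: it gives finite length of the trajectory, hence convergence to a single critical point. Since the line increases $u$, and it cannot end at a saddle for an open set of initial points, the limit is a local maximum. This limit point lies in a unique component $\theta$ of $\mathcal M$.

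Next I will show that the assignment $x\mapsto$ (limiting max component) is locally constant on the open set of regular points whose lines end at maxima. Local maxima components are attracting for the ascending flow: near a component $\theta$ of strict local maxima in the transversal sense, $u$ has a Łojasiewicz-type neighbourhood $U_\theta$ that is forward invariant. Any line entering $U_\theta$ therefore converges into $\theta$, and continuous dependence on initial data gives local constancy. Since $\Omega_i$ is connected and, by definition, consists of points whose lines end in the same component, all of $\Omega_i$ flows into one $\theta$.

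Finally I must handle $x_1,x_2\in\overline{\Omega}_i$, not only in $\Omega_i$; this is the main obstacle. Take a local maximum $x_1\in\overline{\Omega}_i$ and let $\theta_1$ be its component. Pick $y_k\in\Omega_i$ with $y_k\to x_1$. For large $k$ the points $y_k$ lie in the forward-invariant neighbourhood $U_{\theta_1}$, so their lines converge into $\theta_1$. By the previous step every point of $\Omega_i$ flows to $\theta$, so $\theta_1=\theta$, and likewise $\theta_2=\theta$. Hence $x_1,x_2\in\theta$.

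The delicate point is constructing $U_\theta$ when $\theta$ is a non-isolated curve, where $u$ is degenerate along $\theta$. I would set $U_\theta=\{u>\max_\theta u-\varepsilon\}\cap B_\delta(\theta)$ and use that $u$ is constant on $\theta$. Because $\theta$ consists of local maxima, the superlevel set near $\theta$ has a component containing $\theta$ and no other critical points for small $\varepsilon$, by analyticity and isolation of critical values. Since $u$ is nondecreasing along lines, such a line cannot leave this component. Boundary maxima, where $u=0$ on $\partial\Omega$, cannot occur for a sign-changing or positive eigenfunction, because interior maxima of a positive $u$ exceed $0$.
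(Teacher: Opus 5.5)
\textbf{There is a genuine gap: the proposal assumes the key step by choosing its own definition of Neumann domain.}

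You replace the paper's definition with a Morse-style one, and under that definition the heart of the theorem holds by fiat. Definition~\ref{def_nd} defines Neumann domains as the connected components of $\Omega\setminus\mathcal{N}(u)$, where $\mathcal{N}(u)$ is the closure of $\bigcup_{r\in\mathcal{S}}\partial L(r)\setminus\Gamma^D$ together with $\mathcal{C}$. Nothing in this definition says that points of one component share an ascending limit. Your pairing $(\theta,\eta)$, with $\eta$ a component of minima, does not fit the Dirichlet problem either, because integral curves can reach $\Gamma^D$ in finite time. For instance, $u_1>0$ on the disk has no interior minima, and every descending curve ends on $\Gamma^D$. Under your definition it would have no Neumann domain, whereas with the paper's definition it has exactly one. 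So your sentence ``by definition, $\Omega_i$ consists of points whose lines end in the same component'' is exactly what must be proved.

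\textbf{What is fine, and what is missing.} Your local-constancy step (the attracting neighbourhoods $U_\theta$) is the analogue of Lemmas~\ref{lemma_loc_m} and \ref{lemma_loc_ni_m}. Your approximation of $x_j\in\overline{\Omega}_i$ by points of $\Omega_i$ in its basin matches the paper's use of Proposition~\ref{prop_224}. Both are fine, but openness of the basins does not stop a path inside $\Omega_i$ from leaving a basin. The missing idea is Lemma~\ref{lemma_sadle_bound}, used as follows:
\begin{enumerate}
\item Join a point of the basin of $x_1$ to a point of the basin of $x_2$ by a path $\alpha\subset\Omega_i$.
\item Let $s_0$ be the first parameter at which the ascending (left) end $E(s_0)$ leaves the first basin.
\item $E(s_0)$ is not a point of $\Gamma^D\setminus\mathcal{C}$. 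Otherwise $\alpha(s_0)\in G$, so $\Omega_i$ is a boundary Neumann domain. On such a domain $u$ is sign-definite, hence positive, since a maximum lies in $\overline{\Omega}_i$. This contradicts $u=0$ on $\Gamma^D$ and the monotonicity of $u$ along the curve.
\item $E(s_0)$ is not a saddle $r$. Points $\alpha(s_k)\in L(r)$ with $s_k\uparrow s_0$ would have to descend to $r$, giving $u(\alpha(s_0))\ge u(r)$. But $u(\alpha(s_0))<u(r)$, since $\alpha(s_0)$ ascends to $r$. Hence $\alpha(s_0)\in\partial L(r)\subset\mathcal{N}(u)$, which is impossible inside a Neumann domain.
\item Minima cannot be ascending ends. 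Only now does openness of basins force $E(s_0)$ back into the first basin and give $s_0=1$.
\end{enumerate}

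\textbf{A false claim along the way.} You assert that ascending curves ``cannot end at a saddle for an open set of initial points.'' This fails in the analytic, non-Morse setting. For a trivial saddle $r$, the set $W^u(r)$ can have nonempty interior. So saddle endpoints cannot be discarded as non-generic; the first-exit argument above is what actually rules them out along a path in $\Omega_i$.
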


\begin{remark}
    A structure of the local maxima set of an eigenfunction is described in \cite[ \S 2.1, \S 2.2]{anoop2024neumanndomainsplanaranalytic}. In the Dirichlet case the set of local maxima consists of isolated local maxima and simple closed curves of local maxima. Both of them are isolated in the set of all critical points

\end{remark}

We give a proof of the Theorem \ref{th_uniq} in  a Section \ref{sect_proof_th}. Then we have following corollaries from this Theorem.

\begin{corollary}
\label{col_1}
    Under the conditions of the Theorem \ref{th_uniq}, the number of the Neumann domains of the function $u$ is greater or equal than the sum of the number of isolated local maxima and the number of curves of local maxima.
\end{corollary}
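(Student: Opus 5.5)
The plan is to build a map from the set of "maximum objects" to the set of Neumann domains and show that it is injective. The maximum objects are the isolated local maxima and the simple closed curves of local maxima; by the Remark after Theorem \ref{th_uniq}, these are all the local maxima in the Dirichlet case. Injectivity then gives the claimed inequality directly.

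First, we check that every maximum object lies in the closure of at least one Neumann domain. By the definition of Neumann domains in the analytic case (Definition \ref{def_nd}, following \cite{anoop2024neumanndomainsplanaranalytic}), each Neumann domain is bounded by gradient flow lines joining a maximum object to a minimum object. The closures of the Neumann domains cover $\overline{\Omega}$, so any local maximum $x$ lies in some $\overline{\Omega}_i$. We fix, for each maximum object $M$, one Neumann domain $\Omega_{i(M)}$ whose closure contains a point of $M$. This gives a map $M \mapsto \Omega_{i(M)}$.

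Second, we prove injectivity using Theorem \ref{th_uniq}. Suppose two distinct maximum objects $M_1 \neq M_2$ are sent to the same Neumann domain $\Omega_i$. Choose local maxima $x_1 \in M_1 \cap \overline{\Omega}_i$ and $x_2 \in M_2 \cap \overline{\Omega}_i$. Theorem \ref{th_uniq} then gives a curve of local maxima $\theta$ containing both $x_1$ and $x_2$.

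It remains to show this is impossible. By the Remark, curves of local maxima are isolated in the set of critical points, and an isolated local maximum is not on any curve of local maxima. So if $x_1$ is an isolated maximum, it cannot lie on $\theta$, a contradiction. Otherwise both $x_1$ and $x_2$ lie on curves of local maxima. A connected curve of local maxima containing $x_1$ must be contained in the connected component of the local-maximum set through $x_1$, which is $M_1$; likewise it lies in $M_2$. Hence $M_1 = M_2$, a contradiction. So distinct maximum objects are assigned distinct Neumann domains, and the number of Neumann domains is at least the number of maximum objects.

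The main obstacle is the first step: making sure, from the precise analytic-case definition, that each maximum object really touches the closure of some Neumann domain. In particular, curves of local maxima might a priori lie entirely inside, or on the boundary of, something that is not a Neumann domain. I would handle this by appealing to the structure results of \cite{anoop2024neumanndomainsplanaranalytic}: the union of the closures of the Neumann domains is $\overline{\Omega}$. If that is not stated outright, I would argue that the Neumann line set has empty interior, so every point is a limit of points in Neumann domains.
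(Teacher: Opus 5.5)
Your proposal is correct and follows the paper's own argument: each isolated maximum or curve of maxima is sent to a Neumann domain whose closure meets it, and Theorem \ref{th_uniq} makes this map injective. You also justify existence of such a domain via the nowhere density of $\mathcal{N}(u)$ (Remark \ref{rem_NWD}), which needs finiteness of the set of Neumann domains as well; the paper leaves this step implicit. Drop your description of Neumann domains as bounded only by flow lines between maxima and minima, since it is inaccurate in the Dirichlet setting.
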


\begin{corollary}
\label{col_2}
    Under the conditions of the Theorem \ref{th_uniq}, twice the number of the Neumann domains of the function $u$ is greater or equal than the number of its nodal domains.
\end{corollary}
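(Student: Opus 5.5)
We prove Corollary \ref{col_2} by attaching to every nodal domain a Neumann domain, in such a way that no Neumann domain is attached to more than two nodal domains.

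\emph{Step 1: every nodal domain contains a local extremum of $u$.} Let $D$ be a nodal domain, and assume $u>0$ on $D$; the negative case is symmetric after replacing $u$ by $-u$. Since $u=0$ on $\partial D$ and $u>0$ inside, $u$ attains a positive maximum on $\overline D$ at an interior point. That point is a local maximum of $u$. The maximum set of $u$ on $\overline D$ is a union of isolated local maxima and closed curves of local maxima, as in the structural description cited in the Remark. Likewise, every negative nodal domain contains a local minimum.

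\emph{Step 2: from extrema to Neumann domains.} Each local maximum lies in the closure of some Neumann domain. The Neumann domains are built from the gradient flow lines joining maxima to minima, and they cover $\Omega$ up to a set of measure zero. Fix a positive nodal domain $D$ and a local maximum $x_D \in D$, and choose a Neumann domain $\Omega_{i(D)}$ with $x_D\in\overline{\Omega}_{i(D)}$. Suppose two distinct positive nodal domains $D\neq D'$ were assigned the same Neumann domain $\Omega_i$. Then $x_D$ and $x_{D'}$ are both local maxima in $\overline{\Omega}_i$. By Theorem \ref{th_uniq}, a single curve of local maxima $\theta$ passes through both points. Along $\theta$ the function $u$ is locally maximal, so it is constant on $\theta$, and its value there is positive because $u(x_D)>0$. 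Hence $\theta$ is a connected set on which $u>0$, so it lies in a single nodal domain. This contradicts $D\neq D'$. Therefore the map $D\mapsto\Omega_{i(D)}$ is injective on positive nodal domains.

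Applying the same argument to $-u$ shows the analogous map is injective on negative nodal domains; this uses the minimum version of Theorem \ref{th_uniq}, obtained by applying the theorem to $-u$. It follows that each Neumann domain receives at most one positive and at most one negative nodal domain, which gives $\#\{\text{nodal domains}\}\le 2\,\#\{\text{Neumann domains}\}$.

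The step that needs the most care is Step 2: one must check, against Definition \ref{def_nd}, that every local extremum lies in the closure of at least one Neumann domain. One must also check that Theorem \ref{th_uniq} applies to $-u$, which is again an analytic eigenfunction whose Neumann domains coincide with those of $u$, with the roles of maxima and minima exchanged. A further point is the case where a maximum is not isolated but lies on a closed curve of maxima. Then $x_D$ should be taken to be any point of that curve, and the connectedness argument above is unchanged.
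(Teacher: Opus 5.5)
Your argument is correct and is essentially the paper's proof. The paper maps each positive nodal domain to an isolated maximum or curve of maxima inside it; this is injective because such a curve lies in a single nodal domain. It then composes with the injection of Corollary~\ref{col_1}, which is Theorem~\ref{th_uniq}, so your direct map from nodal domains to Neumann domains merges these two steps into one. The only difference is that the paper replaces $u$ by $-u$ to assume the positive nodal domains are at least half of all nodal domains, whereas you treat both signs and add; both versions rely on the fact $\mathcal{N}(-u)=\mathcal{N}(u)$ that you flag.
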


In a Section \ref{examples} we construct a sequence of domains $\Omega^n$ such that $u_1(\Omega^n)$ has at least $n$ local maxima. Moreover these maxima pairwise don not lie on a same curve of critical points. According to the Corollary \ref{col_1} we have that for any given $n \in \mathbb{N}$ there exists a domain such that its first Dirichlet eigenfunction has at least $n$ Neumann domains. Hence, in general Courant-like bound
for Neumann domain count is impossible

Now let us consider convex domains and find sufficient conditions under which a Courant-like estimate is possible.

\begin{theorem}
\label{th1}
Let $\Omega$ be a planar Euclidean domain with an analytic boundary of positive curvature. Then $u_1$ has exactly one Neumann domain and it is homeomorphic to a disk without a point.

\end{theorem}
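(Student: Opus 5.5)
The plan is to reduce the statement to the critical point structure of $u_1$ and then read off the Neumann domain from Definition \ref{def_nd}. We normalize $u_1>0$ in $\Omega$. The domain is bounded by an analytic curve of positive curvature, so it is strictly convex. By the Brascamp--Lieb theorem $\log u_1$ is concave. By the sharper results for strictly convex domains with boundary of positive curvature (Caffarelli--Spruck, Korevaar, and the strict version of Brascamp--Lieb/Lions), $\log u_1$ is in fact strictly concave in $\Omega$. Hence $u_1$ has exactly one critical point $p$ in $\Omega$. It is the global maximum and it is nondegenerate, since the Hessian of $\log u_1$ at $p$ is negative definite and $\nabla u_1(p)=0$.

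Next I study the behaviour near the boundary. By Hopf's lemma $\partial_\nu u_1<0$ on $\partial\Omega$. Analyticity of $\partial\Omega$ makes $u_1$ analytic up to the boundary, so $u_1$ has no critical points on $\overline{\Omega}\setminus\{p\}$. The only local extremum in the interior is therefore $p$. The minimum is attained on all of $\Gamma^D$, and the boundary plays the role of the minimum set in the definition of Neumann domains for the Dirichlet problem. The gradient flow of $u_1$ is then simple. Every gradient line starting at a point of $\Omega\setminus\{p\}$ flows backward to $\partial\Omega$, which it reaches transversally since $\nabla u_1\neq 0$ and points inward there. It flows forward to $p$. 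There are no saddle points, so there are no separatrices. By Definition \ref{def_nd} the Neumann line set is therefore empty apart from $p$ itself and the boundary. The whole set $\Omega\setminus\{p\}$ is one Neumann domain, namely the stable manifold of $p$ minus $p$. Theorem \ref{th_uniq} and Corollary \ref{col_1} agree with this: there is a single maximum and at least one domain.

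For the topology, I would take a small closed disk $D$ around $p$ in which $u_1$ is a Morse maximum. Its boundary $\partial D$ is a level curve transverse to the gradient, homeomorphic to $S^1$. Flowing along normalized gradient lines gives a homeomorphism from $\partial D\times(0,1)$ onto $\Omega\setminus\{p\}$. Each backward trajectory from $\partial D$ hits $\partial\Omega$ in finite normalized time, and the exit point depends continuously on the starting point because of transversality. So the Neumann domain is homeomorphic to a punctured disk, and the boundary map gives a homeomorphism $\partial\Omega\cong S^1$.

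The main obstacle is the first step. Concavity of $\log u_1$ alone does not exclude a degenerate maximum or a segment of maxima. I need strict concavity, or at least uniqueness and nondegeneracy of the critical point. Here I would invoke the known result that for convex domains with boundary of positive curvature, $\log u_1$ is strictly concave: Korevaar--Lewis constant rank theorem combined with the boundary behaviour coming from positive curvature. The second delicate point is to match the boundary behaviour with the precise clauses of Definition \ref{def_nd} in the analytic Dirichlet setting, since $\partial\Omega$ is a curve of minima and not a set of isolated minima.
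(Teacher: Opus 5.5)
Your argument is correct and follows the same route as the paper. An external convexity result gives a unique interior critical point $p$: you use strict log-concavity of $u_1$, where the paper cites Cabr\'e--Chanillo. There are no critical points on $\partial\Omega$: your Hopf-lemma argument replaces the paper's appeal to Alessandrini--Magnanini. Hence $\mathcal{S}=\emptyset$, $\mathcal{N}(u_1)=\{p\}$, and $\Omega\setminus\{p\}$ is the only Neumann domain.

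Your second ``delicate point'' is not an issue. In the paper's definitions $\mathcal{M}_-$ consists of critical points, with local minimality at boundary points taken for the analytic extension, and by Hopf's lemma no point of $\partial\Omega$ is critical, so $\mathcal{M}_-=\emptyset$. The punctured-disk topology also follows at once from convexity of $\Omega$, without the flow construction. Note that the paper's $\gamma_x$ follows $-\grad u$, so your stable manifold of $p$ is the paper's $W^u(p)$.
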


Note that the convexity and analyticity  conditions  are essential. We provide corresponding examples in the Section  \ref{examples}.

By extremum we understand a maximum or a minimum point. Note that $u_2$ has at least two extrema in the  domain $\Omega$, see Remark \ref{rem_two_extr}.

\begin{theorem}
\label{th2}
    Let $\Omega$ be a planar convex Euclidean domain with analytic boundary then
    \begin{enumerate}
        \item  the function $u_2$ has at least three Neumann domains, 
        \item if the number of extrema of the function $u_2$ in $\Omega$ is equal to $2$, then the function $u_2$ has exactly three Neumann domains.
    \end{enumerate}
    
\end{theorem}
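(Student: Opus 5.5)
We plan to combine the nodal structure of $u_2$ with Theorem \ref{th_uniq} and Corollary \ref{col_1}, applied separately to maxima and (via $-u_2$) to minima. For a convex planar domain the nodal line of $u_2$ is known to meet the boundary in exactly two points (Melas' theorem). So $u_2$ has exactly two nodal domains $D_+$ and $D_-$, each simply connected, with $u_2>0$ on $D_+$ and $u_2<0$ on $D_-$. On $D_\pm$ the function $u_2$ is the first Dirichlet eigenfunction of $D_\pm$, so it has a positive maximum in $D_+$ and a negative minimum in $D_-$. Neumann domains are built from gradient flow lines between maxima and minima. The boundary $\partial\Omega$ and the nodal line both consist of level points of $u_2$, and (by the analytic definition of Neumann domains) they should act as saddle/boundary-critical parts of the structure.

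\textbf{Part (1).} First we show that no Neumann domain can contain both a maximum and a minimum in its closure in a way that accounts for all of $\Omega$. Each Neumann domain is a union of flow lines of $\nabla u_2$ running from one minimum (or minimal set) to one maximum. By Theorem \ref{th_uniq}, all maxima in the closure of one Neumann domain lie on a common curve of maxima; applying the theorem to $-u_2$ gives the same for minima. Next consider flow lines that start at points of $\partial\Omega$. Since $u_2=0$ on $\partial\Omega$ and $u_2$ changes sign along $\partial\Omega$ (the nodal line hits the boundary at two points $p,q$), we get the following picture. Near the arc of $\partial\Omega$ in $\overline{D_+}$, flow lines emanate from the boundary and go to maxima. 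Near the complementary arc, flow lines come from minima and end on the boundary. This gives at least three distinct Neumann domains:
\begin{enumerate}
    \item one containing interior flow lines from a minimum in $D_-$ to a maximum in $D_+$ that cross the nodal line;
    \item one adjacent to the boundary arc in $D_+$, whose lines go from $\partial\Omega$ to a maximum;
    \item one adjacent to the boundary arc in $D_-$, whose lines go from a minimum to $\partial\Omega$.
\end{enumerate}
To make this rigorous we have to check that these three families cannot be merged, i.e.\ that they have different ``source–sink'' types in the definition of Neumann domains (Definition \ref{def_nd}). Domains of type (2) and (3) have a boundary source or sink, while type (1) has interior source and sink. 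Type (1) is nonempty because the nodal line is interior except at $p,q$, and a point on it flows backward to a minimum and forward to a maximum. Here analyticity is used to guarantee that almost every flow line ends at an extremum.

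\textbf{Part (2).} If there are exactly two extrema, they are one maximum $M\in D_+$ and one minimum $m\in D_-$. Then every flow line is of one of the three types above, each with a unique sink and a unique source. The three families exhaust $\Omega$ up to separatrices, and each family is connected. Connectedness of the boundary families follows because the arcs $\partial\Omega\cap\overline{D_\pm}$ are connected. Connectedness of the interior family follows because the nodal line is a connected arc from $p$ to $q$, and each of its points flows to $M$ and from $m$. The separatrices correspond to the flow lines through $p$ and $q$, and possibly through boundary critical points.

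The main obstacle is the precise handling of the boundary in the analytic definition of Neumann domains. We need to show that the boundary behaves as the paper's definition requires: normal derivative nonvanishing except near $p,q$, and the points $p,q$ being degenerate boundary critical points. We also need that there are no interior saddles splitting the families further, which in case (2) follows from a Morse/Poincaré–Hopf index count with only two extrema. For this we would try first to adapt the argument used for Theorem \ref{th1}, applying it separately on each nodal domain $D_\pm$.
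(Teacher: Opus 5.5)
Your proposal has genuine gaps, and the most serious one is in part (2).

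\textbf{Part (2).} Everything there depends on showing that, when $u_2$ has exactly two extrema, its only critical points in $\overline{\Omega}$ are the maximum, the minimum, $p$ and $q$. You defer this to ``a Morse/Poincar\'e--Hopf index count'' or to rerunning the proof of Theorem~\ref{th1} on $D_\pm$. Neither works as stated.
\begin{enumerate}
\item $u_2$ need not be Morse.
\item $\grad u_2$ vanishes at $p,q\in\partial\Omega$, and it points inward along one boundary arc and outward along the other. So the standard Poincar\'e--Hopf count does not apply, and the contributions of $p,q$ are exactly what must be controlled.
\item $\partial D_\pm$ has corners at $p,q$. So the relation $n_S-n_E=-1$ used for Theorem~\ref{th1} (which needs analytic boundary and no boundary critical points) is not available for $u_2|_{D_\pm}$.
\end{enumerate}
The missing ingredient is the index formula with boundary multiplicities used in the paper,
\[
\sum_{x_k\in\Omega}m_k+\frac12\sum_{x_k\in\partial\Omega}m_k+n_S-n_E=-1 .
\]
With $n_E=2$ and $m_p,m_q\ge 1$, it forces every other term to vanish. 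Without it, your claim that every flow line is of one of your three types is unjustified. A priori there may be interior saddles, whose stable and unstable curves are of none of your types and add further separatrices.

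Even with only four critical points, you still have to identify $\mathcal N(u_2)$. You need that $\bigcup_{r\in\mathcal S}\partial L(r)\setminus\Gamma^D$ consists of finitely many integral curves joining $p,q$ to the extrema, and that $L(p),L(q)$ have empty interior. Otherwise your three families need not be the components of $\Omega\setminus\mathcal N(u_2)$. The paper gets this from the structure results of \cite{anoop2024neumanndomainsplanaranalytic} and Lemma~\ref{lem_loc}, whose Jordan-curve argument with the nodal arc excludes two separatrices with the same endpoints. It then only needs an upper bound, which Euler's formula supplies.

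Your direct description is a viable alternative: three open, disjoint, connected basins (minimum$\to$maximum, boundary$\to$maximum, minimum$\to$boundary). It would give the count outright, but only after a local analysis at $p,q$ that the proposal does not contain. Once $m_p=m_q=1$, $u_2$ looks like $xy$ in a half-disk at these points. So $p,q$ are \emph{nondegenerate} saddles of the analytic extension, not degenerate ones as you suggest, and each carries exactly one separatrix in $D_+$ and one in $D_-$.

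\textbf{Part (1).} Your picture is the paper's: two boundary domains of opposite sign plus one interior domain through the nodal line. However, both key steps are missing.

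First, the separation of your families does not follow from Definition~\ref{def_nd}, which says nothing about ``source--sink types''. It is a result of \cite{anoop2024neumanndomainsplanaranalytic} (Theorem 3.8, Remark 3.7) that each component of $\Omega\setminus\mathcal N(u)$ lies entirely in $G$ or entirely outside it, and that $u$ is sign-definite on boundary Neumann domains. The paper uses exactly this, together with $\Gamma^D\subset\partial G$ and Remark~\ref{rem_NWD}, to get two distinct boundary domains. Theorem~\ref{th_uniq} and Corollary~\ref{col_1} cannot help here, since they do not distinguish boundary domains from interior ones.

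Second, for the interior domain, a point of the nodal line need not flow from a minimum to a maximum; it may run into a saddle. Also, ``almost every flow line ends at an extremum'' says nothing about the null set $u_2^{-1}(0)$. What you need is a zero $x_0$ of $u_2$ with $x_0\notin\mathcal N(u_2)$. The paper gets one by showing that $u_2^{-1}(0)\cap\mathcal N(u_2)$ is finite: an integral curve meets the zero set at most once, and curves of critical points do not meet it. Then $x_0$ is regular and $u_2$ vanishes on $\gamma_{x_0}$ only at $x_0$. So $\gamma_{x_0}$ never reaches $\Gamma^D$, and the Neumann domain containing $x_0$ is interior.
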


We provide a proof of the Theorem \ref{th1} and Theorem \ref{th2} in the Section \ref{sect_proof_th}. A question  when $u_2$ has exactly two extrema in a convex domain is quite non-trivial and was studied in \cite{DEREGIBUS2022109496}.

\begin{figure}
    \centering

    \centering
    \subfloat[\centering ]{{\includegraphics[width=6cm]{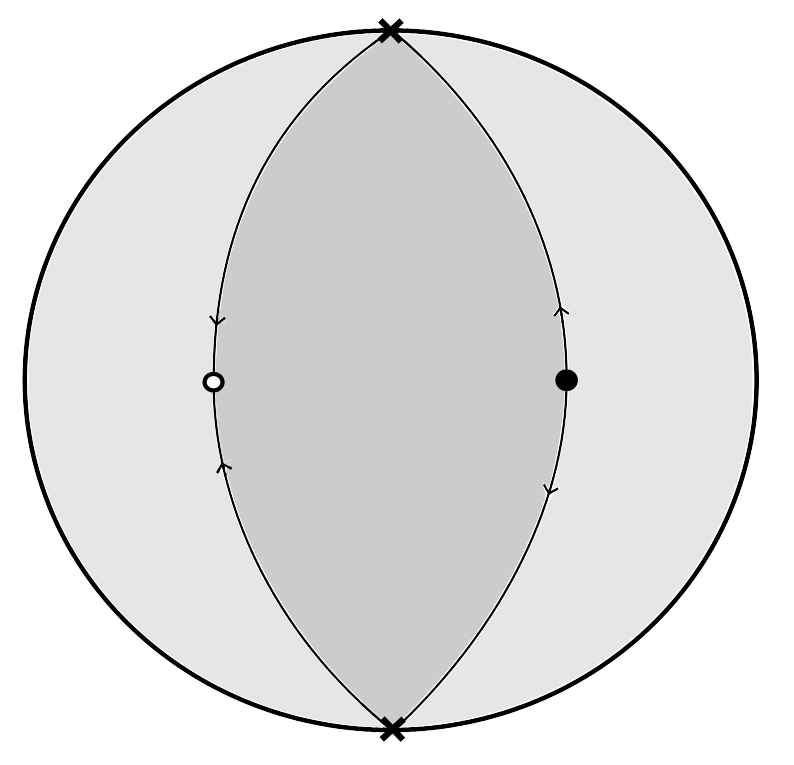} }}%
    \qquad
    \subfloat[\centering ]{{\includegraphics[width=6cm]{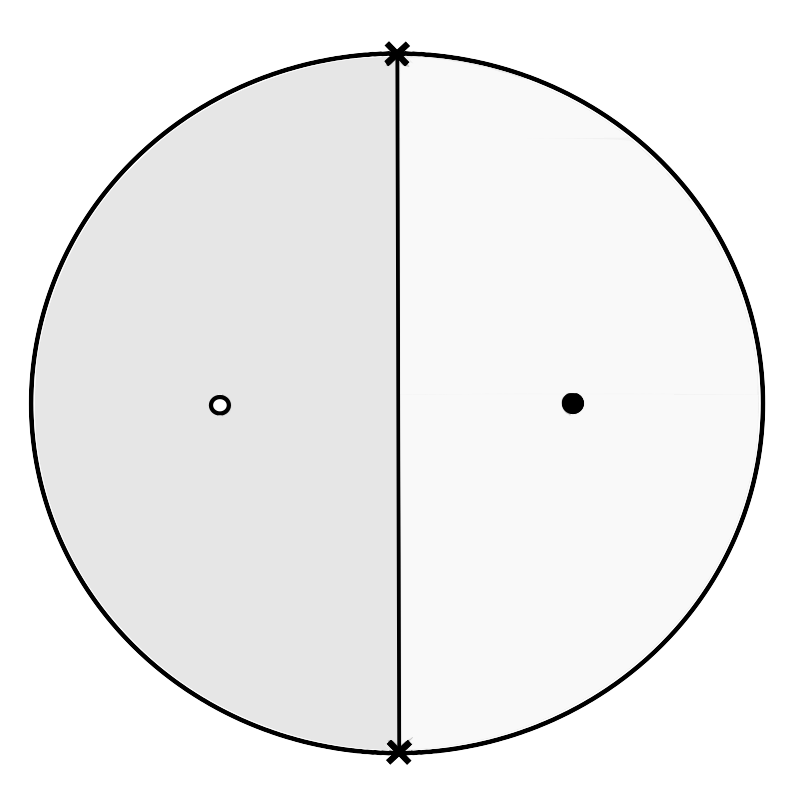} }}
    \caption{The Neumann domains (A) and the nodal domains (B) of the second eigenfunction of the Dirichlet problem for the unit circle. Notations: black circle is a local maximum, white circle is a local minimum, cross is a saddle point.}%
    \label{fig:example}%

    \label{fig1}
\end{figure}

\section{Neumann domains}
\label{sec_def}
\subsection{Definition of the Neumann domains}

Let $u$ be an eigenfunction of the problem (\ref{pr1}). Suppose that there exists the analytic extension of $u$ to $\Omega' \supset \Omega $. Then we say that a point $x \in \partial \Omega$ is a local maximum, minimum, or saddle point if it is such for the extension of the function $u$ to $\Omega'$. Let us clarify that by a local maximum or minimum we mean a non-strict local maximum or minimum. We say that a critical point is a saddle point if it is neither a maximum nor a minimum. A saddle point can be degenerate. Let us introduce some notations. Let
$$\mathcal{C} = \{ x \in \overline{\Omega}: \grad u (x) = 0  \},$$
$$\mathcal{M}_- = \{ q \in \mathcal{C}: q \, \,\text{local minimum} \, \, u \}, $$
$$\mathcal{M}_+ = \{ p \in \mathcal{C}: p \, \,\text{local maximum} \, \, u \}, $$
$$\mathcal{S} = \{r \in \mathcal{C}: r \, \,\text{saddle point} \, \, u  \},$$
$$\mathcal{C}_{sing}  = \{x \in \mathcal{C}: u(x) = 0  \}.$$

For a point $x \in \Omega$ by $\gamma_x(t)$ we denote the integral curve of the vector $-\grad u$ such that $\gamma_x(0) = x$. According to \cite[Lemma 2.23]{anoop2024neumanndomainsplanaranalytic}, as $t$ increases, one of the following two behaviors of the curve $\gamma_x(t)$ is possible.
\begin{itemize}
    \item Curve $\gamma_x(t)$ reaches $\Gamma^D$ in a finite time. That means that there exists $t_x$ such that, $x_0 := \gamma_x(t_x)  \in \Gamma^D$.
    \item Curve $\gamma_x(t)$ tends to a critical point in $\overline{\Omega}$ when $t \to + \infty$. That means that there exists  $x_0 :=\displaystyle{ \lim_{t \rightarrow  +\infty}} \gamma_x(t)$ and $x_0 \in \mathcal{C}$. 
\end{itemize}

In any of these cases, we say that the point $x_0$ the \textit{right end} of the curve $\gamma_x(t)$. Similarly, we can consider the case of decreasing $t$ and define the \textit{left end} of the curve $\gamma_x(t)$. In the case when there exists $\displaystyle{ \lim_{t \rightarrow \pm \infty}} \gamma_x(t)$, we will denote it by $\gamma_x(\pm \infty).$

\begin{definition}
Let  $x_0 \in \overline{\Omega}$ be a critical point of the function $u$, then sets 
$$W^s(x_0) := \{ x \in \overline{ \Omega} : \gamma_x(+ \infty) = x_0  \},$$
$$W^u(x_0) := \{ x \in \overline{ \Omega} : \gamma_x(- \infty) = x_0  \},$$
are called \textit{stable} and \textit{unstable sets} of the  point $x_0$.
\end{definition}

For a point $r \in \mathcal{S}$  we define $L(r) := W^s(r) \cup W^u(r)$ and
$$L := \bigcup_{r \in \mathcal{S}} \overline{L(r)},$$
here closure is taken in $\mathbb{R}^2$. In the paper \cite{anoop2024neumanndomainsplanaranalytic}, several equivalent definitions of the Neumann domains are proposed, see \cite[Definition 3.5, Definition 3.6, Remark 3.7]{anoop2024neumanndomainsplanaranalytic}. We present one of them. Assume that $\Gamma^D$ is regular enough to provide the analyticity of the function $u$ in $\overline{\Omega}$. For example,  $\Gamma^D$ is analytic, see \cite[Remark 2.1]{anoop2024neumanndomainsplanaranalytic} for details.

\begin{definition}[\protect{\cite[remark 3.7]{anoop2024neumanndomainsplanaranalytic}}]
\label{def_nd}

The \textit{Neumann lines} of the function $u$ is given by
$$\mathcal{N}(u) := \overline{\bigcup_{r \in \mathcal{S}} \partial L(r) \setminus \Gamma^D} \cup \mathcal{C} ,$$
here boundary and closure is taken in $\mathbb{R}^2.$
The \textit{Neumann domains} of the function $u$ are connected components of $\Omega \setminus \mathcal{N}(u)$
\end{definition}

According to \cite[Theorem 3.8 (3)]{anoop2024neumanndomainsplanaranalytic}, the set of Neumann lines $\mathcal{N}(u)$ consists of a finite number of integral curves of the vector field $-\grad u$, isolated critical points, and curves of critical points.

\begin{remark}
    In \cite{anoop2024neumanndomainsplanaranalytic} the definition of the Neumann domains is given in a more general situation, namely in the case of a Dirichlet-Neumann problem. In this case we should add the part of boundary, where eigenfunction satisfy Neumann condition, to the set of Neumann lines.
\end{remark}

\begin{remark}
  Note that Neumann lines are not always a union of curves. The set $\mathcal{N}(u)$ may contain isolated critical points. For example, for the unit disk $\mathcal{N}(u_1) = \{(0, 0) \}$, so $u_1$ has exactly one Neumann domain $\{(x, y) \, | \, x^2 + y^2 \leq 1 \} \setminus \{ (0, 0) \}$.
\end{remark}

Put $G := \{ x \in \Omega : \gamma_x(t_x) \in \Gamma^D \, \text{for some} \, \,t_x \}$ and let us give one more definition.

\begin{definition}[\cite{anoop2024neumanndomainsplanaranalytic}]
    Connected components of $G \setminus \mathcal{N}(u)$ are called \textit{boundary Neumann domains}. Other connected components of $\Omega \setminus \mathcal{N}(u)$ are called \textit{interior Neumann domains}.
\end{definition}
This definition is correct due to \cite[Remark 3.7]{anoop2024neumanndomainsplanaranalytic}.

\subsection{On a motivation of the definition of the Neumann domains}

    Following \cite{MR3535866}, in this section we consider  Neumann domains in a different context. Namely, we consider the case when $u$ is a Morse function on a closed Riemannian manifold $(M, g)$. It is natural to include all critical points of $u$ in the set of the Neumann lines, because
$$\frac{\partial u}{\partial \nu}(z_0) = (\grad u (z_0), \nu) = 0$$
for any vector $\nu$, if $z_0 \in \mathcal{C}$.

  Also it is natural to include in the set of Neumann lines regular curves $\gamma(t)$ along which $\frac{\partial u}{\partial \nu}|_{\gamma} = 0$.  Such curves can be parametrized so that $\dot{\gamma}(t) = -\grad u(\gamma(t)).$ But due to the existence theorem for a system of ODE such a curve is passing through every regular point of $u$.

    Morse theory suggests how to reasonably choose a finite subset from the set of all integral curves of the vector field $-\grad u$. Assume further that $u: M \to \mathbb{R}$ is a Morse-Smale function. That means that $u$ is a Morse function and its stable and unstable manifolds intersect transversely. Let $p, \, q$ be critical points of $u$, and let their indices be $\lambda_p, \, \lambda_q$, respectively. If the set $W^u(q) \cap W^s(p)$ is nonempty, then it is a smooth immersed submanifold of dimension $\lambda_q - \lambda_p$ (see e. g. \cite[\textsection 6]{MR2145196}). For a point $r \in \mathcal{S}$ in the two-dimensional case, we have $\lambda_r = 1$, so $L(r)$ is a finite union of smooth curves. Thus, in the case when $u$ is a Morse-Smale function on a closed manifold, the Neumann lines can be defined by $\mathcal{N}(u) = L \cup \mathcal{C}$. In the paper \cite{MR3535866} it is proved that this definition also correct for just a Morse function.
  
    In the case of  manifolds with boundary, to define Neumann domains the function is extended to a Morse function on an open neighborhood of the manifold,  see \cite[\textsection 3]{MR3535866}. However, the question  when such an extension exists is non-trivial, see \cite[remark 3.3]{MR3535866}.

    Finally, let us return to the planar analytic case. Here  an eigenfunction can be analytically extended into some neighborhood of $\overline{\Omega}$. However, the definition $\mathcal{N}(u) = L \cup \mathcal{C}$ is not appropriate in this situation, because a non-Morse function  may have a \textit{trivial saddle point} (see \cite[Remark 3.4]{anoop2024neumanndomainsplanaranalytic}). This is a degenerate critical point in a neighborhood of which the function $u$ is right-equivalent to the function $\overline{u}(x, y) = \overline{u}(0) \pm x^k \pm y^2$ for odd $k$ (see \cite[Remark 2.5]{anoop2024neumanndomainsplanaranalytic}). For the function $\overline{u}$ we have $W^s(0) = \{(x, y): x\geq 0 \}$, hence $L$ contains a nonempty open subset. Therefore instead of $L(r)$, we consider the set $\partial L(r)$ in the analytic case. Also $\mathcal{C}$ and $\Gamma^N$ are naturally added to it.

\subsection{Supplementary statements from \cite{anoop2024neumanndomainsplanaranalytic}}
For convenience, we formulate several technical statements, which can be found e. g. in  \cite{anoop2024neumanndomainsplanaranalytic}. By $B_{\varepsilon}(x)$ we denote the open ball of radius $\varepsilon$ centered at the point~$x$.

\begin{proposition}[\protect{\cite[Proposition 2.24, Remark 2.25]{anoop2024neumanndomainsplanaranalytic}}]
\label{prop_224}
Let $x_0 \in \overline{\Omega}$ be a local maximum point of the eigenfunction  $u$  for problem (\ref{pr1}). Then for any $\varepsilon > 0$ there exists $\delta > 0$, such that if  $x \in B_{\delta}(x_0) \cap \overline{\Omega}$  then $\gamma_x(t) \in B_{\varepsilon}(x_0) \cap \overline{\Omega} $ for any $t \leqslant 0$, and hence $\gamma_x(- \infty) \in  B_{\varepsilon}(x_0) \cap \overline{\Omega}.$
    
\end{proposition}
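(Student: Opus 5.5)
The plan is to use the monotonicity of $u$ along the gradient flow together with the Łojasiewicz gradient inequality, which is available because $u$ is real analytic in a neighbourhood $\Omega'$ of $\overline{\Omega}$. Put $c:=u(x_0)$. Since $x_0$ is a (possibly non-strict) local maximum of the analytic extension, I fix $r>0$ such that $B_r(x_0)\subset\Omega'$ and $u\le c$ on $B_r(x_0)$. By Łojasiewicz, after shrinking $r$ there are $C>0$ and $\theta\in(0,1)$ such that
$$|\grad u(y)|\ge C\,|u(y)-c|^{\theta}\quad\text{for } y\in B_r(x_0).$$
Along $\gamma_x$ we have $\frac{d}{dt}u(\gamma_x(t))=-|\grad u(\gamma_x(t))|^2$. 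Hence, as $t$ decreases, $u$ increases, and the distance $c-u$ decreases.

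The key estimate is a length bound. As long as $\gamma_x([t,0])\subset B_r(x_0)$, set $h(s):=c-u(\gamma_x(s))\ge 0$. Then
$$\frac{d}{ds}h^{1-\theta}=(1-\theta)h^{-\theta}|\grad u|^2\ge C(1-\theta)|\grad u|=C(1-\theta)|\dot\gamma_x|.$$
Integrating over $[t,0]$ gives
$$\operatorname{length}(\gamma_x([t,0]))\le \frac{(c-u(x))^{1-\theta}}{C(1-\theta)}.$$
Given $\varepsilon\le r$, I use the continuity of $u$ to choose $\delta<\varepsilon/2$ with $(c-u(x))^{1-\theta}/(C(1-\theta))<\varepsilon/2$ for $x\in B_\delta(x_0)$. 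A first-exit-time argument then shows that $\gamma_x(t)$ never leaves $B_\varepsilon(x_0)$ for $t\le 0$: at a first exit time the traversed length would be at least $\varepsilon-\delta>\varepsilon/2$, which contradicts the bound. The same bound shows that the backward trajectory has finite total length, so $\gamma_x(-\infty)$ exists. It lies in $\overline{B_\varepsilon(x_0)}$, and it is a critical point because $\int|\grad u|^2\,dt<\infty$ along a curve of finite length. Replacing $\varepsilon$ by $\varepsilon/2$ puts the limit in the open ball.

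The step I expect to be the main obstacle is ensuring that the backward trajectory is defined for all $t\le 0$ and stays in $\overline{\Omega}$, in particular when $x_0\in\Gamma^D$. The only way to leave $\overline\Omega$ is to cross $\Gamma^D$, where $u=0$. I would handle this by cases.
\begin{itemize}
\item If $x_0\in\Omega$, the first eigenfunction case with $u>0$ is easy. In general, a trajectory starting at $x$ in a small ball with $x$ far from the boundary never reaches $\Gamma^D$ at all, by the localization just proved.
\item If $x_0\in\Gamma^D$, then $c=0$. Interior points near $x_0$ satisfy $u<0$, since $u\le c=0$ near $x_0$ and $u\not\equiv 0$ on open sets. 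Reaching $\Gamma^D$ backward in finite time would require $u$ to attain the value $c$ at a regular point of $B_r(x_0)$. That is impossible, because a point with $u=c$ in a ball where $u\le c$ is a local maximum and therefore critical.
\end{itemize}
So $\gamma_x(t)$ stays in $\Omega\cap B_\varepsilon(x_0)$ for all $t\le 0$, or is constant if $x$ is itself critical. The solution exists for all $t\le 0$ since it remains in a compact set where the vector field is smooth. Finally, the case $x\in\Gamma^D$ is handled in the same way.
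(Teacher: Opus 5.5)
The paper gives no proof of this statement. It quotes it from \cite[Proposition 2.24, Remark 2.25]{anoop2024neumanndomainsplanaranalytic}, so there is no internal argument to compare yours against.

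Your proposal is a correct, self-contained proof. It is the standard Lojasiewicz length-bound argument for stability of local extrema of analytic gradient systems. It shows exactly where analyticity enters: for merely $C^\infty$ functions a non-strict extremum can fail to be stable for the gradient flow. It also gives something extra. The finite-length bound produces the existence of $\gamma_x(-\infty)$ and its criticality directly, without the dichotomy from \cite[Lemma 2.23]{anoop2024neumanndomainsplanaranalytic} quoted in Section~\ref{sec_def}.

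A few points should be tidied.

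\textbf{Your second case is empty.} By Remark~\ref{rem_max_neq_0} every local maximum satisfies $u(x_0)>0$. That maximum-principle argument applies equally to the analytic extension, which satisfies $-\Delta u=\lambda u$ on $\Omega'$. Hence $x_0\notin\Gamma^D$. It then suffices to shrink $\varepsilon$ below $\operatorname{dist}(x_0,\Gamma^D)$, and your localisation traps the trajectory in $B_\varepsilon(x_0)\subset\Omega$. This is the clean version of your first bullet.

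\textbf{The claim in the second bullet is unjustified, though unused.} You assert that interior points near $x_0$ satisfy $u<0$. This does not follow from $u\le 0$ together with $u\not\equiv 0$ on open sets, since a nodal line could pass through. The rest of that bullet does not use it, and the case is vacuous anyway.

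\textbf{Make $\varepsilon<r$ strict.} Alternatively, state the gradient inequality on $\overline{B_r(x_0)}$. Either way, the piece of trajectory up to a first exit from $B_\varepsilon(x_0)$ then lies where the inequality is valid.

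\textbf{State that $h>0$ explicitly.} Along a non-stationary trajectory in $B_r(x_0)$ one has $h>0$, because a point there with $u=c$ is a local maximum and hence critical. This is what legitimises differentiating $h^{1-\theta}$.
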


\begin{lemma}[\protect{\cite[Lemma 2.28]{anoop2024neumanndomainsplanaranalytic}}]
\label{lem_228}

Let $x_0 \in \Omega $ be a regular point of  the eigenfunction  $u$  for problem (\ref{pr1}). Let $t_0$ be such that $\gamma_{x_0}(t_0) \in \Omega$. Then for any $\varepsilon > 0 $ there exists  $\delta > 0$, such that for any  $x \in B_\delta(x_0) \cap \Omega$ we have $\gamma_x(t_0) \in B_\varepsilon(\gamma_{x_0}(t_0)) \cap \Omega$.
    
\end{lemma}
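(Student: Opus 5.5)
The plan is to obtain the statement as the standard continuous dependence of solutions of an ODE on initial data. The key input is that $\grad u$ is analytic, hence locally Lipschitz, on $\Omega$. The only point needing care is that the perturbed trajectories neither leave $\Omega$ nor hit $\Gamma^D$ before time $t_0$.

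First I fix the reference trajectory. Since $\gamma_{x_0}$ is defined at time $t_0$ and $\gamma_{x_0}(t_0)\in\Omega$, the whole arc
$$K:=\gamma_{x_0}([\min(0,t_0),\max(0,t_0)])$$
lies in $\Omega$. Indeed, an integral curve is continued only until it reaches $\Gamma^D$ (first alternative) or tends to a critical point as $t\to\pm\infty$ (second alternative). Neither can happen at a finite time strictly between $0$ and $t_0$ if the curve is still defined at $t_0$ and is in $\Omega$ there. The set $K$ is compact and contained in the open set $\Omega$, so $d:=\operatorname{dist}(K,\partial\Omega)>0$. I put $\eta:=d/2$ and let $K_\eta:=\{y:\operatorname{dist}(y,K)\le\eta\}$, which is a compact subset of $\Omega$. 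Since $u$ is analytic, $\grad u$ is $C^1$ on a neighborhood of $K_\eta$, so there is $L$ with
$$|\grad u(y)-\grad u(z)|\le L|y-z| \quad\text{for all } y,z\in K_\eta.$$

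Next comes the Gronwall estimate. For $x$ close to $x_0$, as long as $\gamma_x(s)\in K_\eta$ for all $s$ between $0$ and $t$, we have
$$|\gamma_x(t)-\gamma_{x_0}(t)|\le |x-x_0|+L\Big|\int_0^t|\gamma_x(s)-\gamma_{x_0}(s)|\,ds\Big|,$$
hence $|\gamma_x(t)-\gamma_{x_0}(t)|\le |x-x_0|e^{L|t|}$. I choose
$$\delta:=\tfrac12\min(\varepsilon,\eta)\,e^{-L|t_0|}.$$
A continuity (bootstrap) argument then shows that for $|x-x_0|<\delta$ the curve $\gamma_x$ stays within distance $\min(\varepsilon,\eta)/2<\eta$ of $\gamma_{x_0}$ on the whole time interval between $0$ and $t_0$. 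To see this, let $T$ be the supremum of times up to which $\gamma_x$ is defined and stays in $K_\eta$. If $T$ lay strictly before $t_0$, the estimate would keep $\gamma_x$ at distance at most $\eta/2$ from $K$ up to $T$. By local existence in the interior of $K_\eta\subset\Omega$, the solution would then extend past $T$ inside $K_\eta$, a contradiction.

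The step requiring the most care is this bootstrap, specifically excluding premature termination of $\gamma_x$ at $\Gamma^D$. This is handled precisely by choosing $\eta<\operatorname{dist}(K,\partial\Omega)$: the whole tube $K_\eta$ stays inside $\Omega$, so $\gamma_x$ cannot reach the boundary in that time range. The curve $\gamma_x$ also cannot converge to a critical point in finite time, since solutions of a Lipschitz ODE are unique. Hence $\gamma_x(t_0)$ is defined, lies in $K_\eta\subset\Omega$, and satisfies $|\gamma_x(t_0)-\gamma_{x_0}(t_0)|<\varepsilon$, which is the claim. The regularity of $x_0$ is not essentially used beyond guaranteeing that $\gamma_{x_0}$ is a nonconstant trajectory; the argument works verbatim otherwise.
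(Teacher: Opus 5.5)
Your proof is correct. The paper gives no proof of this lemma, quoting it from \cite[Lemma 2.28]{anoop2024neumanndomainsplanaranalytic}, and your argument is the standard continuous-dependence-on-initial-data argument that such a statement rests on. You use a Gronwall estimate in the compact tube $K_\eta\subset\Omega$ around the reference arc, and a bootstrap then shows the perturbed trajectory stays in that tube up to time $t_0$. The only issue specific to this setting is that neither trajectory may stop at $\Gamma^D$ before time $t_0$, and your choice $\eta<\operatorname{dist}(K,\partial\Omega)$ handles it.
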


\section{Supplementary statements}

\subsection{Supplementary statements for a proof of the Theorem \ref{th_uniq}} 

Let $u$ be an analytic eigenfunction of problem (\ref{pr1}). Consider a point $x \in \overline{\Omega} \setminus \mathcal{C}$ and an integral curve $\gamma_x(t)$. The ends of the curve $\gamma_x(t)$ are points of the set $\mathcal{C} \cup \Gamma^D$. Note that the left end of $\gamma_x(t)$ cannot be a local minimum, hence it belongs to the set $(\mathcal{C} \cup \Gamma^D) \setminus \mathcal{M}_-$. This follows from the fact that $u(\gamma_x(t))$ strictly monotonically increases when $t$ decreases.

For a continuous curve $\alpha: [0, 1] \to \Omega$, we define a map $E_\alpha: [0, 1] \to (\mathcal{C} \cup \Gamma^D) \setminus \mathcal{M}_-$, which maps a point $s \in [0, 1]$ to the left end of the curve $\gamma_{\alpha(s)}(t)$. It is easy to see that for  $x \in \mathcal{C}$ we have $E_\alpha^{-1}(x) = \alpha^{-1}(W^u(x))$. If the curve $\alpha$ is fixed we  write $E(s)$ instead of $E_{\alpha}(s)$.

\begin{remark}
    \label{rem_max_neq_0}

    Let $x_0$ be a local maximum of $u$, then $u(x_0) > 0$. Otherwise, there exists $\varepsilon > 0$ such that for any $x \in B_\varepsilon(x_0)$ we have $u(x) \leq 0$. But then $\Delta u \geq 0$ in $B_\varepsilon(x_0)$. So, $u$ is a subharmonic function that attains its maximum at an interior point of $B_\varepsilon(x_0)$. According to the strong maximum principle, $u(x) \equiv const$. Hence we arrive at a contradiction. 

\end{remark}
 
\begin{lemma}
\label{lemma_loc_m}
    Let $x_0 \in \overline{\Omega}$ be a point of isolated local maximum of the function $u$, then
    \begin{enumerate}
        \item the set $W^{u}(x_0) \cap \Omega$ is open,
        \item for any continuous curve $\alpha:[0,1] \rightarrow \ \Omega$, the set $E^{-1}(x_0)$ is open in $[0, \, 1]$. 
        
    \end{enumerate}
    \begin{proof}

        (1) According to the remark \ref{rem_max_neq_0} we have that  $x_0 \in \Omega$. It follows from \cite[Lemma 2.13]{anoop2024neumanndomainsplanaranalytic} that the point $x_0$ is isolated in $\mathcal{C}$.  Hence there exists $\varepsilon >0$, such that $B_\varepsilon(x_0) \subset \Omega$ and $B_\varepsilon(x_0) \cap \mathcal{C} = x_0$.   By proposition \ref{prop_224}, there exists $\varepsilon_1 $ such that for $x \in B_{\varepsilon_1}(x_0)$ we have  $\gamma_x(-\infty) =x_0$. Consider a point $y \in W^u(x_0) \cap \Omega$.  For the point $y$ there exists $t_y$, such that $\gamma_y(t_y) \in B_{\varepsilon_1}(x_0)$. Let $\varepsilon_2$ be such that $B_{\varepsilon_2}(\gamma_y(t_y)) \subset B_{\varepsilon_1}(x_0)$. If $y$ is critical, then $y = x_0$, because $y \in W^u(x_0)$. But the point $x_0$ is contained in the set $W^u(x_0) \cap \Omega$ together with its neighborhood, namely $B_{\varepsilon_1}(x_0)$. So, it remains to consider the case when $y$ is not critical. By Lemma \ref{lem_228}, there exists $\delta > 0$ such that for any $ x \in B_{\delta}(y) $ we have $\gamma_x(t_y) \in B_{\varepsilon_2}(\gamma_y(t_y))$. But then $\gamma_x(-\infty) = x_0$ for all $ x \in B_{\delta}(y)$, hence $B_{\delta}(y) \subset W^u(y)
\cap \Omega$. Therefore, $W^{u}(x_0) \cap \Omega$ is open.

(2) Under our assumption, a set $\alpha^{-1}(W^{u}(x_0) \cap \partial\Omega)$ is empty. Thus,
$$E^{-1}(x_0) = \alpha^{-1}(W^{u}(x_0) \cap \Omega)\cup \alpha^{-1}(W^{u}(x_0) \cap \partial\Omega) = \alpha^{-1}(W^{u}(x_0) \cap \Omega).$$
The curve  $\alpha$ is continuous, hence  from the first part of lemma \ref{lemma_loc_m} it follows that $ \alpha^{-1}(W^{u}(x_0) \cap \Omega)$ is open in $[0, 1]$.

    \end{proof}
\end{lemma}

\begin{lemma}
\label{lemma_loc_ni_m}
Let $\theta$ be a curve of critical points, then
\begin{enumerate}
\item the set $W^u(\theta) \cap \Omega$ is open, where $W^u(\theta) = \displaystyle{\bigcup_{x \in \theta}} W^u(x)$,
\item for any continuous curve $\alpha:[0,1] \rightarrow \Omega$ the set $E^{-1}(\theta)$ is open in $[0, \, 1]$.
\end{enumerate}

    \begin{proof}
       
            (1) From \cite[Lemma 2.13]{anoop2024neumanndomainsplanaranalytic} we have that the function $u$ does not vanish on $\theta$. Hence $\theta \subset \Omega$. Consider a point $y \in W^u(\theta) \cap \Omega$. Let $\gamma_y(- \infty) = x_0 \in \theta$. According to \cite[Lemma 2.13]{anoop2024neumanndomainsplanaranalytic} the curve $\theta$ is isolated in $\mathcal{C}$, so there exists   $\varepsilon > 0$ such that $\mathcal{C} \cap \overline{B_\varepsilon(x_0)} \subset \theta$ and $\overline{B_\varepsilon(x_0)} \subset \Omega$.  It follows from proposition \ref{prop_224} that there exists $\varepsilon_1$ such that for all $x \in B_{\varepsilon_1}(x_0)$ we have $\gamma_x(- \infty) \subset \overline{B_\varepsilon(x_0)}$. Note that ends of any integral curve lie in $\mathcal{C} \cup \Gamma^D$, so $\gamma_x(-\infty) \in (\mathcal{C} \cap \overline{B_\varepsilon(x_0)}) \subset \theta$.  For the point $y$ consider $t_y$ such that $\gamma_y(t_y) \in B_{\varepsilon_1}(x_0)$. Take $\varepsilon_2$ such that $B_{\varepsilon_2}(\gamma_y(t_y)) \subset B_{\varepsilon_1}(x_0)$. If the point  $y$ is critical, then $y = x_0\in \theta$. But $x_0$ lie in $W^{u}(\theta) \cap \Omega$ together with its neighborhood, namely with $B_{\varepsilon_1}(x_0)$.  It remains to consider the case when the point $y$ is not critical. From Lemma \ref{lem_228} we know that there exists $\delta > 0$ such that for all $x \in B_{\delta}(y)$ we have $\gamma_{x}(t_y) \in B_{\varepsilon_2}(x_0)$. Hence $\gamma_{x}(- \infty) \in \theta$. Thus, $B_{\delta}(y) \subset W^u(\theta) \cap \Omega$, so $W^u(\theta) \cap \Omega$  is open. 

             (2)  Under our assumption a set $\alpha^{-1}(W^{u}(\theta) \cap \partial\Omega)$ is empty. Hence
        $$E^{-1}(\theta) = \alpha^{-1}(W^{u}(\theta) \cap \Omega)\cup \alpha^{-1}(W^{u}(\theta) \cap \partial\Omega) =  \alpha^{-1}(W^{u}(\theta) \cap \Omega).$$
   The curve  $\alpha$ is continuous, hence  from the first part of lemma \ref{lemma_loc_ni_m} it follows that $ \alpha^{-1}(W^{u}(\theta) \cap \Omega)$ is open in $[0, 1]$.
    \end{proof}
\end{lemma}

\begin{lemma}
    \label{lemma_sadle_bound}
Let $\Omega_i$ be a Neumann domain, and let $\alpha:[0, 1] \to \Omega_i$ be a continuous curve. Suppose that $E(0) \in \mathcal{M}_+ \cap \overline{\Omega}_i $. Let $s_0 >0$ be such that for $s \in [0, s_0)$ we have $E(s) \in \mathcal{M}_+ $. Then $E(s_0) \notin \Gamma^D \setminus \mathcal{C}$ and $E(s_0) \notin \mathcal{S}$.

    \begin{proof}
Let us prove that $E(s_0) \notin \Gamma^D \setminus \mathcal{C}$ by contradiction. Let $E(s_0) \in \Gamma^D \setminus \mathcal{C}$, then there exists $t_0 < 0$ such that $\gamma_{\alpha(s_0)}(t_0) \in \Gamma^D$. Note that in this case $\Omega_i$ cannot be an interior Neumann domain. Indeed, by the definition of $G$, we have $\alpha(s_0) \in G$. Moreover, $\alpha(s_0) \notin \mathcal{N}(u)$, since $\alpha(s_0)$ belongs to a Neumann domain. But then $\alpha(s_0) \in G \setminus \mathcal{N}(u)$, so $\alpha(s_0)$ belongs to a boundary Neumann  domain.
             
If $\Omega_i$ is a boundary Neumann  domain, then by \cite[Theorem 3.8]{anoop2024neumanndomainsplanaranalytic} the function $u$ is  strictly positive or strictly negative in $\Omega_i$. Note that $\overline{\Omega}_i$ contains a maximum, so from Remark \ref{rem_max_neq_0} we conclude that the function $u$ is positive in $\Omega_i$. In particular, $u(\alpha(s_0)) > 0$. But then $u(\gamma_{\alpha(s_0)}(t_0)) >0$ due to the monotonicity of $u(\gamma_x(t))$ with respect to $t$. Thus $\gamma_{\alpha(s_0)}(t_0) \notin \Gamma^D $, so we arrive at a contradiction.

       Let us prove that $E(s_0) \notin \mathcal{S}$. Otherwise $\alpha(s_0) \in W^u(r) \subset L(r)$, for some $r \in S$. Let us show that $\alpha(s_0) \in \partial L(r)$. On the one hand, in any neighborhood of $\alpha(s_0)$ there exist points from $L(r)$, for example, points of the curve $\gamma_{\alpha(s_0)}(t)$. On the other hand, in any neighborhood of $\alpha(s_0)$ there exist points not from $L(r)$. Indeed, let us show that the points of the curve $\alpha(s)$ for $s\in (s_0 - \varepsilon, s_0)$ and sufficiently small $\varepsilon > 0$ do not belong to $L(r)$.
        
        Otherwise there exists a sequence $\{ s_k \}_{k \in \mathbb{N}}$ such that $s_k < s_0$, $\lim\limits_{k \to \infty} s_k = s_0$, and also $\alpha(s_k) \in L(r)$. Consider the curves $\gamma_{\alpha(s_k)}(t)$. Since $s_k <s_0$, we have $E(s_k) \in \mathcal{M}_+$. But $\alpha(s_k) \in L(r)$, so one of the ends of $\gamma_{\alpha(s_k)}(t)$ is $r$. Consequently, the right end of $\gamma_{\alpha(s_k)}$ is $r$.
By assumption, $\alpha(s_k) \in \Omega_i$, therefore $\alpha(s_k) \notin \mathcal{C}$. Then, from the strict monotonicity of $u$ along $\gamma_{\alpha(s_k)}$, we conclude that $u(\alpha(s_k)) > u(r)$. Passing to the limit as $k \to \infty$, we obtain $u(\alpha(s_0)) \geqslant u(r)$. Now consider the curve $\gamma_{\alpha(s_0)}(t)$. Its left end is the point $r$. Similarly, from monotonicity, we conclude that $u(r) > u(\alpha(s_0))$. Thus we have arrived at a contradiction. So, $\alpha(s_0) \in \partial L(r)$.  Since $\alpha(s_0) \notin \Gamma^D$, we conclude that $\alpha(s_0) \in \mathcal{N}(u)$. This cannot be the case, because $\alpha(s_0)$ belongs to the Neumann domain.

    \end{proof}
    
\end{lemma}

\begin{lemma}
\label{lem_final}
Let $\Omega_i$ be a Neumann domain, and let $\alpha:[0, 1] \to \Omega_i$ be a continuous curve. Then
\begin{enumerate}[(a)]
\item if $E(0) = x_0 \in \overline{\Omega}_i$ is a point of an isolated local maximum, then $E(1) = x_0$,
\item if $E(0) = x_0 \in \theta \cap \overline{\Omega}_i$, where $\theta$ is a curve of local maximums, then $E(1) \in \theta$.
    \end{enumerate}

    \begin{proof}
       
        (a) Let
$$s_0 = \sup\{s \in [0,1] : \forall t \in [0, s) \text{ we have } E(t) = x_0 \}.$$
Note that $s_0 > 0$, since $0 \in E^{-1}(x_0)$ and the set $E^{-1}(x_0)$ is open by Lemma \ref{lemma_loc_m}.

Let us show that $E(s_0) = x_0$. Otherwise, $E(s_0) \in (\mathcal{C} \cup \Gamma^D) \setminus (\{ x_0\} \cup \mathcal{M}_-)$. By Lemma \ref{lemma_sadle_bound}, we have $E(s_0) \notin \Gamma^D \setminus \mathcal{C}$ and $E(s_0) \notin \mathcal{S}$. Thus, $E(s_0) \in \mathcal{M}_+ \setminus\{ x_0\}$. If $E(s_0) = x_1$, where $x_1$ is an isolated maximum, then by Lemma \ref{lemma_loc_m} there exists $\varepsilon >0$ such that $E((s_0 -\varepsilon, s_0 ]) = x_1$. But we chose $s_0$ such that $E((s_0 -\varepsilon, s_0)) = x_0$, so this is impossible.
If $x_1$ is a non-isolated maximum, then $x_1 \in \theta_1$, where $\theta_1$ is a curve of local maxima. By Lemma \ref{lemma_loc_ni_m} there exists $\varepsilon$ such that $E((s_0 -\varepsilon, s_0]) \subset \theta_1$. This  contradicts the choice of $s_0$, since $x_0 \notin \theta_1$.

  Let us prove that $s_0 =1$. Assume the contrary, let $s_0 < 1$. We know $s_0 \in E^{-1}(x_0)$. By Lemma \ref{lemma_loc_m} the set $E^{-1}(x_0)$ is open, so for sufficiently small $\varepsilon > 0$ we have $(s_0 -\varepsilon, s_0 + \varepsilon) \subset E^{-1}(x_0)$. But this means that $E(s) = x_0$ for all $s \in [0, s_0 + \varepsilon)$. This contradicts the maximality of $s_0$. Thus, $s_0 = 1$. But it was also shown that $E(s_0) = x_0$, hence $E(1) = x_0$.

          (b) The proof of this statement is similar to the proof of previous one. Let
$$s_0 = \sup\{s \in [0,1] : \forall t \in [0, s) \text{ we have } E(t) \in \theta \}.$$

From the fact that $0 \in E^{-1}(\theta)$ and Lemma \ref{lemma_loc_ni_m}, we conclude that $s_0 > 0$. Let us  prove that $E(s_0) \in \theta$. Otherwise, $E(s_0) \in (\mathcal{C} \cup \Gamma^D) \setminus (\theta \cup \mathcal{M}_-)$. From Lemma \ref{lemma_sadle_bound}, we conclude that $E(s_0) \in \mathcal{M}_+ \setminus \theta$. Let $E(s_0) = x_1$. If $x_1$ is an isolated maximum point, then $x_1 \notin \theta$. By Lemma \ref{lemma_loc_m}, there exists $\varepsilon >0$ such that $E((s_0 -\varepsilon, s_0 ]) = x_1$. But we chose $s_0$ such that $E((s_0 -\varepsilon, s_0)) \subset \theta$, so this is impossible. If $x_1$ is a non-isolated maximum, then $x_1 \in \theta_1$, where $\theta_1$ is a curve of critical points. By Lemma \ref{lemma_loc_ni_m}, there exists $\varepsilon$ such that $E((s_0 -\varepsilon, s_0]) \subset \theta_1$. But $E((s_0 -\varepsilon, s_0 )) \subset \theta$, therefore $\theta \cap \theta_1 \neq \emptyset$.  Then $\theta = \theta_1$, and $E(s_0) \in \theta$.

Let us show that $s_0 =1$. Assume the contrary, let $s_0 < 1$. We know that $s_0 \in E^{-1}(\theta)$.  By Lemma \ref{lemma_loc_ni_m} the set $E^{-1}(\theta)$ is open, so for sufficiently small $\varepsilon > 0$ we have $(s_0 -\varepsilon, s_0 + \varepsilon) \subset E^{-1}(\theta)$. But this means that $E(s) \in \theta$ for all $s \in [0, s_0 + \varepsilon)$. This contradicts the maximality of $s_0$. Thus we have $s_0 = 1$. It was also shown that $E(s_0) \in \theta$, so $E(1) \in \theta$.

    \end{proof}
\end{lemma}

\subsection{Supplementary statements for a proof of the Theorem \ref{th2}}  Let a function $u$ be an eigenfunction of the problem (\ref{pr1}).

\begin{definition}
\label{def_nod_line}
 The \textit{nodal set} of the function $u$ is given by
$$\mathcal{D}(u) = \overline{\{ x \in \Omega: u(x) = 0\}}.$$
The connected components of $\Omega \setminus \mathcal{D}(u) $ are called the \textit{nodal domains} of $u$.
\end{definition}

The number of the nodal domains of a function $u_2$ is equal to 2, see \cite[Corollary 4.1.34]{LMP}. By $\Omega^+$ we  denote the nodal domain where $u_2$ is positive, and by $\Omega^-$ the one where $u_2$ is negative.

Payne's conjecture states that $\mathcal{D}(u_2)$ intersects $\partial \Omega$ in exactly two points. This conjecture was proved for a general convex domain \cite{MR1259610} and disproved in the non-convex case \cite{MR1480548}. For more information on Payne's conjecture, see \cite{MR1480548}.

\begin{remark}
\label{pq_crit}
     For a convex domain $\Omega$, we denote by $p, \, q$ the intersection points of $\mathcal{D}(u_2)$ with $\partial \Omega$. Note that the points $p, \, q$ are critical points. This follows from the fact that the set $u^{-1}(0)$ in the neighborhood of these points is not a smooth curve, since it consists of points $\partial \Omega \cup \mathcal{D}(u_2)$.
\end{remark}

Let us formulate the following corollary of Paine's conjecture.

\begin{lemma}
\label{lem_boud_pt}
    Let $\Omega$ be a convex domain and $u_2$ be a second eigenfunction of the problem (\ref{pr1}). Then there exist $\varepsilon > 0$ and $x^+,\, x^- \in \partial \Omega $ such that for any $x \in B_{\varepsilon}(x^+)\cap \Omega$ we have $u_2(x) > 0$, and for any $x \in B_{\varepsilon}(x^-)\cap \Omega$ we have $u_2(x) < 0$.
\end{lemma}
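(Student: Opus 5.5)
We deduce the lemma from Payne's conjecture for convex domains \cite{MR1259610}. By that result, the closure of the nodal line, $\mathcal{D}(u_2)$, meets $\partial\Omega$ in exactly two points $p,\,q$ (cf. Remark \ref{pq_crit}). Since $\Omega$ is convex, $\partial\Omega$ is a Jordan curve, so $\partial\Omega\setminus\{p,q\}$ consists of two open arcs $A_1, A_2$. The nodal set $\mathcal{D}(u_2)$ is a curve joining $p$ and $q$ through $\Omega$. Because $u_2$ has exactly two nodal domains \cite[Corollary 4.1.34]{LMP}, this curve separates $\Omega$ into $\Omega^+$ and $\Omega^-$. Moreover $\partial\Omega^+\cap\partial\Omega$ and $\partial\Omega^-\cap\partial\Omega$ are the closures of the two arcs, one each.

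\textbf{Step 1: each arc lies next to a single nodal domain.} Fix a point $y$ in the interior of an arc $A_j$. Then $y\notin\mathcal{D}(u_2)$, since $\mathcal{D}(u_2)\cap\partial\Omega=\{p,q\}$, and $\mathcal{D}(u_2)$ is closed. Hence there is $\varepsilon_y>0$ with $B_{\varepsilon_y}(y)\cap\mathcal{D}(u_2)=\emptyset$. The set $B_{\varepsilon_y}(y)\cap\Omega$ is connected: a ball intersected with a convex set is convex. It also avoids the zero set of $u_2$ in $\Omega$, so $u_2$ has constant sign on it. Thus every point of $A_1\cup A_2$ has a neighbourhood in $\Omega$ on which $u_2$ is either strictly positive or strictly negative. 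The sign is locally constant along each arc, and each arc is connected, so it is constant on each arc.

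\textbf{Step 2: the two arcs carry opposite signs.} This is the step I expect to be the main obstacle. Suppose both arcs had the same sign, say positive. Then some neighbourhood $U$ of $\partial\Omega\setminus\{p,q\}$ inside $\Omega$ lies in $\Omega^+$. The nodal domain $\Omega^-$ would then touch $\partial\Omega$ only at $p$ and $q$, so $\partial\Omega^-\subset\mathcal{D}(u_2)$.

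To rule this out I would use the structure of the nodal set: it is a finite graph of analytic arcs with equiangular crossings at interior critical points, and at boundary points the arcs meet the boundary with equal angles. I would argue that $\Omega^-$ is then bounded by a closed loop of $\mathcal{D}(u_2)$ through $p$ or $q$. A loop from $p$ back to $p$ combined with a path to $q$ forces a crossing point in $\Omega$ or extra boundary points. A simpler route is to note that $\mathcal{D}(u_2)$ consists of a single arc from $p$ to $q$, which is how Payne's property for convex domains is usually stated. Since $\Omega$ is a topological disk and this arc is a crosscut, the Jordan curve theorem shows that the two complementary components each contain one of the arcs $A_1,A_2$ on their boundaries. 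These components are exactly $\Omega^+$ and $\Omega^-$.

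\textbf{Step 3: conclusion.} Choose $x^+$ in the interior of the positive arc and $x^-$ in the interior of the negative arc. Take $\varepsilon=\min(\varepsilon_{x^+},\varepsilon_{x^-})$, where these radii come from Step 1. Then $u_2>0$ on $B_\varepsilon(x^+)\cap\Omega$ and $u_2<0$ on $B_\varepsilon(x^-)\cap\Omega$, as the lemma requires.
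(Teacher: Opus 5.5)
Your proposal is correct and is essentially the paper's argument. Both rest on Alessandrini's result that $\mathcal{D}(u_2)$ is a simple arc meeting $\partial\Omega$ only at $p,q$, and both get the sign-definite neighbourhood from the connectedness of the convex set $B_\varepsilon(x)\cap\Omega$ for a boundary point $x\notin\mathcal{D}(u_2)$. The only difference is the topological step: the paper shows $\partial\Omega^\pm\not\subset\mathcal{D}(u_2)$ by contradiction (an arc cannot separate the plane), whereas your crosscut route via the Jordan curve theorem shows directly that each nodal domain borders a whole boundary arc. Your first route in Step 2 relies on boundary angle properties that are not available for a general convex domain, so you can simply drop it.
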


\begin{proof}

From definition of $\Omega^\pm$, we have $u_2|_{\partial\Omega^\pm} = 0$. Therefore, $\partial\Omega^\pm \subset \partial \Omega \cup \mathcal{D}(u_2)$. But then $\partial \Omega^\pm \setminus \mathcal{D}(u_2) \subset \partial \Omega$. Note that the sets $\partial \Omega^\pm \setminus \mathcal{D}(u_2)$ are not empty. Indeed, suppose  the contrary, let $\partial \Omega^+ \setminus \mathcal{D}(u_2) = \emptyset$. Then $\partial \Omega^+ \subset \mathcal{D}(u_2)$. But this contradicts Payne's conjecture, because according to \cite{MR1259610}, the set $\mathcal{D}(u_2)$ is a simple non-closed curve, so $\mathbb{R}^2 \setminus \mathcal{D}(u_2)$ has exactly one connected component. But then $\mathbb{R}^2 \setminus \partial \Omega^+$ has one connected component, which is impossible. Similarly, one can show that $\partial \Omega^- \setminus \mathcal{D}(u)$ is not empty.

Let $x^\pm \in \partial \Omega^\pm \setminus \mathcal{D}(u_2)$ and let $\varepsilon = \frac{1}{2} \min \{dist(x^+, \mathcal{D}(u_2)), dist(x^-, \mathcal{D}(u_2)) \}$. It is easy to see that $\varepsilon >0$. Note that $u_2$ is strictly positive or strictly negative in $B_\varepsilon(x^\pm) \cap \Omega$. Let us notice that $u_2$ is positive in $B_\varepsilon(x^+) \cap \Omega$. Indeed,  $x^+ \in \partial \Omega^+$, so there exists $x^+_0 \in \Omega^+ \cap B_\varepsilon(x^+)$. Then we have that $u_2(x^+_0) >0 $. Therefore in $B_\varepsilon(x^+) \cap \Omega$ the function is positive. Similarly, in $B_\varepsilon(x^-) \cap \Omega$ the function is negative.
\end{proof}

\begin{remark}
\label{rem_NWD}
     The set $ \mathcal{N}(u)$ is nowhere dense. Indeed, from \cite[Lemma 2.13]{anoop2024neumanndomainsplanaranalytic} we know that non-isolated critical points form smooth curves. By \cite[Theorem 3.8]{anoop2024neumanndomainsplanaranalytic}, the set $\mathcal{N}(u)$ is a finite union of isolated critical points, integral curves of the vector field $-\grad u$, and curves of critical points. Both types of curves are smooth, hence they are nowhere dense sets. Therefore, the set $\mathcal{N}(u)$ is nowhere dense, as  a finite union of nowhere dense sets.
\end{remark}

\begin{remark}
\label{rem_two_extr}
Note that for a convex domain $\Omega$, the function $u_2$ has at least four critical points in $\overline{\Omega}$. Indeed, in the nodal domain $\Omega^+$ there is at least one local maximum point. Let us denote it by $max$. Similarly in the nodal domain  $\Omega^-$ there is at least one local minimum point.  We denote it by $min$.  Also, according to remark \ref{pq_crit}, points of a set $\{ p, \, q \} = \mathcal{D}(u_2) \cap \partial \Omega$ are critical. 
\end{remark}

\begin{lemma}
    \label{lem_loc}
    Let $\Omega$ be a convex domain and let $u_2$ be a second eigenfunction of the problem (\ref{pr1}). Suppose that $u_2$ have exactly four critical points in $\overline{\Omega}$, namely, $max, \, min, \, p, \, q$. Then $\bigcup\limits_{r \in \mathcal{S}} \partial L(r) \setminus \Gamma^D $ consists of at most four integral curves, each of which connects an extremum with a saddle point. Moreover, the points $max$ and $min$ are the ends of at most two integral curves.

     \begin{proof}
By \cite[Proposition 4.13]{anoop2024neumanndomainsplanaranalytic}, the set $\bigcup \limits_{r \in \mathcal{S}} \partial L(r) \setminus \Gamma^D$ consists of integral curves together with their endpoints outside $\Gamma^D$ and curves of critical points. By our assumption, there are no curves of critical points. Consider an arbitrary integral curve $\gamma \subset \bigcup\limits_{r \in \mathcal{S}} \partial L(r) \setminus \Gamma^D$. Note that $\gamma \subset \partial L(p)$ or $\gamma \subset \partial L(q)$, this follows from \cite[Lemma 4.6]{anoop2024neumanndomainsplanaranalytic}. By \cite[Proposition 4.10]{anoop2024neumanndomainsplanaranalytic}, one end of $\gamma$ is a saddle point, hence it a point $p$ or $q$. Since $p, \, q \in \Gamma^D$, the other end of $\gamma$ cannot be on $\Gamma^D$. Therefore, the second end lies inside of $\Omega$. So it is a point $max$ or $min$, because there are no other critical points in $\Omega$.

Each curve in $\bigcup\limits_{r \in \mathcal{S}} \partial L(r) \setminus \Gamma^D$ connects a point from the set $\{ max, \, min \}$ with a point from the set $\{p, \, q \}$. Therefore, there are only four possibilities for the ends of the curve $\gamma$. Namely, the ends of $\gamma$ are points from the following sets, $\{max, \, p \}$, $\{max, \, q \}$, $\{min, \, p \}$, or $\{min, \, q \}$. Therefore, if $\bigcup\limits_{r \in \mathcal{S}} \partial L(r) \setminus \Gamma^D$ consists of more than four curves, then there exist two curves $\gamma_1, \, \gamma_2$ with the same ends. Let us consider the case when the ends of $\gamma_1, \, \gamma_2$ are  $max, \, p$. The remaining cases one can consider in a similar way.

The set $\overline{\gamma_1 \sqcup \gamma_2}$ is a Jordan curve. Indeed, the curves $\gamma_1$ and $\gamma_2$ can be reparameterized after adding ends into curves $\hat{\gamma}_1, \, \hat{\gamma}_2$ such that $\hat{\gamma}_i: [0, 1] \to \overline{\Omega}$ and $\hat{\gamma}_i(0) = p, \, \hat{\gamma}_i(1) = max$, where $i=1,\,2$. Moreover, the points $p, \, max$ are the only points of their intersection. Indeed, the original curves are integral curves of the vector field, therefore they do not intersect.

Let $\hat{\Omega}$ be the domain bounded by $\overline{\gamma_1 \sqcup \gamma_2}$. Let us show that $\hat{\Omega} \subset \Omega^+$ by contradiction. Suppose that there exists a point $x_1 \in \hat{\Omega}$ such that $u_2(x_1) \leq 0$. Since $max \in \partial \hat{\Omega}$, there exists a point in the domain $\hat{\Omega}$ where $u_2$ takes a positive value. The domain $\hat{\Omega}$ is linearly connected, hence there exists a point where $u_2$ equals $0$. We denote this point by $x_0$. Note that $q \notin \hat{\Omega}$. This follows from the fact that the set $\hat{\Omega}$ is open and lies in $\overline{\Omega}$, and $q \in \partial \Omega$. From \cite{MR1259610} we know that $\mathcal{D}(u_2)$ is a simple curve $d(t)$, where $d(0) = p, \, d(1) = q$. Since $u_2(x_0) = 0$, there exists $t_0 > 0$ such that $d(t_0) = x_0$. We know that $d(t_0) \in \hat{\Omega}$ and $d(1) = q \notin \hat{\Omega}$, hence there exists $t_1 > t_0 > 0$ such that $d(t_1) \in \partial \hat{\Omega} = \overline{\gamma_1 \sqcup \gamma_2} $. Note that the function $u_2$ on the set $\partial \hat{\Omega} = \overline{\gamma_1 \sqcup \gamma_2} $ is non-negative and takes the value zero only at the point $p$. This follows from the fact that $u_2$ is strictly monotone along $\gamma_1, \, \gamma_2$ and that $u_2(\hat{\gamma_i}(0)) = u_2(p) = 0$, $u_2(\hat{\gamma}_i(1)) = u_2(max) > 0$. But then $d(t_1) = p$, because $u_2(d(t_1)) = 0$. This contradicts the fact that $d(t)$ is a simple curve. Therefore $\hat{\Omega} \subset \Omega^+$.
             
By the Remark \ref{rem_NWD}, there exists $x \in \hat{\Omega} \setminus \mathcal{N}(u_2)$. Denote the Neumann domain that contains $x$ by $\Omega_1$. Since $\gamma_1, \, \gamma_2 \subset \partial L(p)$, we have that $\partial \hat{\Omega} \subset \mathcal{N}(u_2)$. The Neumann domains are connected components of $\Omega \setminus \mathcal{N}(u_2)$, therefore $\Omega_1 \subset \hat{\Omega}$. Consequently, $\Omega_1 \subset \Omega^+$, so by \cite[Theorem 3.8]{anoop2024neumanndomainsplanaranalytic}, the domain $\Omega_1$ is a boundary Neumann domain. By the definition of a boundary Neumann domain, there exists $t_x$ such that $\gamma_x(t_x) \in \Gamma^D$. Note that $\gamma_x(t_x) \notin \hat{\Omega}$, so there exists $t_0 \in [0, t_x]$ such that $\gamma_x(t_0) \in \partial \hat{\Omega} = \gamma_1 \sqcup \gamma_2 \sqcup \{ max, \, p\}$. Since the points $max, \, p$ are critical, $\gamma_x(t_0) \notin \{ max, p\}$. But then $\gamma_x(t_0) \in \gamma_1 \sqcup \gamma_2$, which contradicts with the uniqueness of the integral curve.

     \end{proof}

\end{lemma}

\section{Proof of the Theorems}
\label{sect_proof_th}

\subsection{Proof of the Theorem \ref{th_uniq}}

\begin{proof}

    Let us prove this Theorem by contradiction.  Let $x_0 \neq x_1 \in \overline{\Omega}_i$ be local maxima such that they does both lie on the same curve of critical points.

 Let $x_0, \, x_1$ be isolated local maxima. It follows from Proposition \ref{prop_224} that there exist points $y_0, \, y_1 \in \Omega_i$ such that $\gamma_{y_i }(-\infty) = x_j, \, j = 0, \, 1.$ Indeed,  take $\varepsilon$ such that $B_\varepsilon(x_j) \subset \Omega$, and $B_\varepsilon(x_j)$ does not contain other critical points except for $x_j, \, j = 0, \, 1.$ According to Proposition \ref{prop_224}, there exists $\delta$ such that for any $y'_j \in B_\delta(x_j)$ we have $\gamma_{y'_j }(-\infty) = x_j, \, j = 0, \, 1$. Note that the set $B_\delta(x_j) \cap \Omega_i$ is nonempty, so there exist $y_j \in \Omega_i$  such that $\gamma_{y_i }(-\infty) = x_j, \, j = 0, \, 1.$ Let us connect the points $y_0, \, y_1$ with a continuous curve $\alpha \subset \Omega_i$. Since $\alpha(0) = y_0$, we have $E_\alpha(0) = x_0$. It follows from lemma \ref{lem_final} that $E_\alpha(1) = x_0$. But then $\gamma_{y_1 }(-\infty) = x_0$, hence $x_0 = x_1$. So, we arrive at a contradiction.

Let exactly one of the maxima be isolated. Without loss of generality we assume that $x_0$ is the isolated one. Then $x_1$ lies on a curve $\theta_1$  of local maxima. Arguing similarly, we obtain that there exists a point $y_0 \in \Omega_i$ such that $\gamma_{y_0 }(-\infty) = x_0$. Let us prove that there exists a point $ y_1 \in \Omega_i$ such that $ \gamma_{y_1}(-\infty) \in \theta_1$. By \cite[Lemma 2.13]{anoop2024neumanndomainsplanaranalytic}, the curve $\theta_1$ is isolated in $\mathcal{C}$. Therefore, there exists $\varepsilon$ such that $B_\varepsilon(x_1) \cap \mathcal{C} \subset \theta_1$ and $B_\varepsilon(x_1) \subset \Omega$. By Proposition \ref{prop_224}, there exists $\delta$ such that for any $y'_1 \in B_\delta(x_1)$ we have $\gamma_{y'_1}(-\infty) \in B_\varepsilon(x_1)$. But then $\gamma_{y'_1}(-\infty) \in \theta_1 $, since $\gamma_{y'_1}(-\infty) \in \mathcal{C}$. Note that the set $B_\delta(x_1) \cap \Omega_i$ is nonempty, so there exists $y_1$ with the required property. Let us connect the points $y_0, \, y_1$ with a continuous curve $\alpha \subset \Omega_i$. Then $E_\alpha(0) = x_0$, so it follows from the Lemma \ref{lem_final} that $E_\alpha(1) = x_0$. But then $x_0 \in \theta_1$. This contradicts the assumption that $x_0, \, x_1$ do not lie on the same curve  of local maxima.

Let $x_0, \, x_1$ be non-isolated local maxima. Then they lie on curves $\theta_0, \, \theta_1$ of local maxima. Similarly, one can show that there exist points $y_0, \, y_1 \in \Omega_i$ such that $\gamma_{y_0 }(-\infty)\in \theta_0, \, \gamma_{y_1}(-\infty) \in \theta_1$. Again connect the points $y_0, \, y_1$ with a continuous curve $\alpha \subset \Omega_i$. Since $E_\alpha(0) \in \theta_0$, it follows from Lemma \ref{lem_final} that $E_\alpha(1) \in \theta_0$. But then $\theta_0 \cap \theta_1 \neq \emptyset$, which means $\theta_0 = \theta_1$. So, we arrive at a contradiction with our assumption.

\end{proof}

\subsection{Proof of the Corollary  \ref{col_1}}
\begin{proof}

Let us  define a map from the set of all isolated maxima and all curves of local maxima  to the set of Neumann domains.  Each isolated local maximum $x_0$ or  curve of local maxima  $\theta$ we map to an arbitrary Neumann domain $\Omega_i$ such that $x_0 \in \overline{\Omega}_i$ or $\theta \cap \overline{\Omega}_i \neq \emptyset$. By Theorem \ref{th_uniq}, this map is injective, which implies the required.  
    
\end{proof}

\subsection{Proof of the Corollary \ref{col_2}}
\begin{proof}

After multiplying $u$ by $-1$ if necessary, we can assume that the number of nodal domains where the function $u$ is positive is greater or equal than the number of nodal domains where $u$ is negative. Now consider an arbitrary nodal domain $\mathcal{A}_i$ such that $u$ is positive in it. Let  $x_0 \in \overline{\mathcal{A}_i}$ be a local maximum of the function $u$ in $ \overline{\mathcal{A}_i}$. Then the point $x_0$ is  isolated local maximum or $x_0 \in \theta$, where $\theta$ is a curve  of local maxima. Now we define a map from the set of nodal domains, where $u$ is positive to the set of isolated maxima and curves of maxima. Let us map the domain $\mathcal{A}_i$ to the point $x_0$ if $x_0$ is an isolated local maximum, or to the curve of critical points $\theta$ if $x_0 \in \theta$. It is easy to see that $x_0 \in \mathcal{A}_i$, $\theta \subset \mathcal{A}_i$. So, this map is injective. Therefore, the number of nodal domains where the function is positive  is less or equal than the sum of the number of isolated local maxima and the number of curves of local maxima. Therefore, by Corollary \ref{col_1} the number of nodal domains where the function is positive  is less or equal than the number of Neumann domains. Also note that, by our assumption, the number of nodal domains where $u$ is positive is greater or equal than half the number of all nodal domains.
\end{proof}

\subsection{Proof of the Theorem \ref{th1}}

Let us introduce some notation from \cite{ASNSP_1992_4_19_4_567_0}. Let $u$ be an eigenfunction of the problem (\ref{pr1}). By $n_S$ we denote the number of saddle points of the function $u$ at which it is non-zero. By $n_E$ we denote the number of isolated extrema. Note that in \cite{ASNSP_1992_4_19_4_567_0}, only points of the set $\mathcal{S} \setminus \mathcal{C}_{sing}$ are called saddle, but we  continue to use the terminology that was introduced in the Section \ref{sec_def}. Let us prove the Theorem \ref{th1}.

\begin{proof}

The first eigenfunction of the Dirichlet problem is positive or negative in the domain $\Omega$. Without loss of generality we can assume that the function $u_1$ is positive. From \cite[Theorem 1]{article} it follows that for a domain $\Omega$ with a boundary of positive curvature, the function $u_1$ has a unique critical point $x_0$ in $\Omega$, and that the point $x_0$ is a point of local maximum.
    
   Let us check that the function $u_1$ has no other critical points in $\overline{\Omega}$. According to \cite[ Corollary 3.4]{ASNSP_1992_4_19_4_567_0}, for a domain with an analytic boundary, we have
$$n_S - n_E = -1.$$ Hence $n_S = 0$.
Note that in \cite{ASNSP_1992_4_19_4_567_0} it is not specified that these are the numbers of critical points in $\overline{\Omega}$ and not in $\Omega$. But in proof of \cite[ Corollary 3.4]{ASNSP_1992_4_19_4_567_0} it is shown that the function $u_1$ has no critical points on $\partial \Omega$.

Now let us prove that the function $u_1$ has exactly one Neumann domain. Since $\mathcal{S} = \emptyset, \mathcal{M}_- = \emptyset $ and $\mathcal{M}_+ = \{x_0\}$, we obtain that
$$\mathcal{N}(u_1) = \overline{\bigcup_{r \in \mathcal{S}} \partial L(r) \setminus \Gamma^D} \cup \mathcal{C}=\{ x_0\}.$$ Since $\Omega$ is a domain and the Neumann domains are the connected components of $\Omega \setminus \mathcal{N}(u_1) = \Omega \setminus \{ x_0 \}$, we conclude that their number is equal to one.

\end{proof}

\subsection{Proof of the statement (1) of the Theorem \ref{th2}}

\begin{proof}
Let us show that the function $u_2$ has at least two boundary Neumann domains. By Lemma \ref{lem_boud_pt}, there exist points $x^+, x^- \in \Gamma^D$ and their neighborhoods $B_\varepsilon(x^+), B_\varepsilon(x^-)$ such that $u_2(x) > 0$ for all $x \in B_\varepsilon(x^+) \cap \Omega$ and $u_2(x) < 0$ for all $x \in B_\varepsilon(x^-) \cap \Omega$. It follows from \cite[Lemma 4.1]{anoop2024neumanndomainsplanaranalytic} that $\Gamma^D \subset \partial G$, therefore $x^+, x^- \in \partial G$. Thus, there exist points $y^+, y^- \in G$ such that $y^+ \in B_\varepsilon(x^+)$ and $y^- \in B_\varepsilon(x^-)$. Since the sets $G$, $B_{\varepsilon}(x^{\pm})$ are open, there exists a sufficiently small $\delta$ such that $B_\delta(y^+) \subset G \cap B_{\varepsilon}(x^+)$ and $B_\delta(y^-) \subset G \cap B_{\varepsilon}(x^-)$. Finally, there exist points $z^+ \in B_{\delta}(y^+) \setminus \mathcal{N}(u_2)$ and $z^- \in B_{\delta}(y^-) \setminus \mathcal{N}(u_2)$, since  the set $\mathcal{N}(u_2)$ is nowhere dense by Remark \ref{rem_NWD}. By construction, $z^+, \, z^- \in G \setminus \mathcal{N}(u_2)$, hence the set of boundary Neumann domains is non-empty. Moreover, there are at least two boundary Neumann domains. Indeed, the points $z^+, z^-$ cannot belong to the same boundary Neumann domain, because by \cite[Theorem 3.8]{anoop2024neumanndomainsplanaranalytic} the function $u_2$ is sign-definite in the boundary Neumann domain.

Now we show that there is at least one interior Neumann domain. Note that the set $u_2^{-1}(0) \cap \Omega$ is infinite. Otherwise, the set $\Omega \setminus u_2^{-1}(0)$ is  a domain in which the continuous function $u_2$ takes only strictly positive and strictly negative values. That is impossible. Let us show that $u_2^{-1}(0) \cap \mathcal{N}(u_2)$ is finite. Indeed, by \cite[Theorem 3.8]{anoop2024neumanndomainsplanaranalytic} the set $\mathcal{N}(u_2)$ is a finite union of integral curves of the vector field $-\grad u_2$, isolated critical points, and curves  of critical points. The set $u_2^{-1}(0)$ intersects every integral curve at no more than one point.  According to \cite[Lemma 2.13]{anoop2024neumanndomainsplanaranalytic}, the set $u_2^{-1}(0)$ does not intersect  curves of critical points. Thus, the set $u_2^{-1}(0) \cap \mathcal{N}(u_2)$ is finite, as a finite union of finite sets. Consequently, there exists a point $x_0 \in (u_2^{-1}(0) \cap \Omega) \setminus \mathcal{N}(u_2)$. Thus $x_0$ belongs to some Neumann domain. But this domain is interior, since $u_2(x_0) = 0$ and the function $u_2$ is sign-definite in the boundary Neumann domain by \cite[theorem 3.8]{anoop2024neumanndomainsplanaranalytic}.
    
\end{proof}

\subsection{Proof of the statement (2) of the Theorem \ref{th2}}

We need to introduce  more notation from \cite{ASNSP_1992_4_19_4_567_0}. In the Euclidean plane with coordinates $x, \, y$, we introduce the complex coordinate $z = x + i y$. By $\partial_z$ we denote the complex derivative, that is, $\partial_z = \frac{1}{2}(\partial_x - i \partial_y)$. Let $\Omega$ be a convex bounded domain with analytic boundary. In this case, $\partial \Omega$ has exactly one connected component. Suppose that the function $u$ is a Dirichlet eigenfunction of $\Omega$, and all critical points of $u$ are isolated. Then, according to \cite[Theorem 3.5]{ASNSP_1992_4_19_4_567_0}, for any $z_0 \in \mathcal{C}_{sing}$, the function $\partial_z u$ is asymptotically equivalent to the function $c_0 (z - z_0)^{m_0}$, as $z \to z_0$, where $c_0 \in \mathbb{C}, \, m_0 \in \mathbb{N}$ are some constants. The number $m_0$ is called the \textit{integral multiplicity} of zero $z_0$, see \cite[Theorem 3.5]{ASNSP_1992_4_19_4_567_0}. For points $z_0 \notin \mathcal{C}_{sing}$, we set $m_0 = 0.$
 
\begin{proof}

We only need to show that the number of Neumann domains does not exceed three. Recall that the intersection points of $\mathcal{D}(u)$ with $\partial \Omega$ are denoted by $p ,\, q$. By Remark \ref{pq_crit}, the points $p, q$ belong to $C_{sing}$. Then, from \cite[Theorem 3.5]{MR1259610} it follows that

$$\sum_{x_k \in \Omega } m_k + \frac{1}{2} \sum_{x_k \in \partial\Omega} m_k + n_S - n_E = -1.$$
Let us substitute $n_E = 2$ and consider separately  the terms corresponding to the points $p, \, q$ in a sum $\frac{1}{2} \sum_{x_k \in \partial\Omega} m_k$. Then we obtain

$$\sum_{x_k \in \Omega} m_k + \frac{1}{2} \sum_{x_k \in \partial\Omega \setminus \{p, \, q \}} m_k + n_S + \frac{1}{2}(m_p + m_q - 2)= 0.$$
Since points $p, \, q$ are critical, we have that $m_p \geq 1, m_q \geq 1$. Therefore, all terms on the left-hand side of the equality are nonnegative, hence each term is equal to zero. Thus, in $\overline{\Omega}$ there are exactly $4$ critical points: $p, \, q, \, max, \, min$, where $max, \, min$ are the maximum and minimum points, respectively.
   Points $p, \, q$ split $\partial \Omega$ into two simple curves. We denote them by $\Gamma_1, \, \Gamma_2$. Consider the following cases.

     (a) The points $max, \, min$ are isolated points in $\mathcal{N}(u)$. By Lemma \ref{lem_loc}, the set $\bigcup\limits_{r \in \mathcal{S}} \partial L(r) \setminus \Gamma^D$ is empty, since every curve from it has an end in the set $\{max, \, min \}$. Therefore $\mathcal{N}(u) = \{ max, min, p, q\}$, so $\Omega \setminus \mathcal{N}(u)$ has exactly one connected component. By the statement (1) of the Theorem \ref{th2}, this case is impossible.
     
     (b) The point $min$ is an isolated point in $\mathcal{N}(u)$, but the point $max$ is not. By Lemma \ref{lem_loc}, the set $\bigcup\limits_{r \in \mathcal{S}} \partial L(r) \setminus \Gamma^D$ consists of at most two integral curves $\gamma_1, \, \gamma_2$, each of which connects $max$ with a saddle point. Consider a planar graph $G$ with vertices $max, \, p, \, q$ and edges $\gamma_1, \gamma_2, \, \Gamma_1, \, \Gamma_2$, where one of the edges $\gamma_1$ or $\gamma_2$ may be absent. Note that $G \cup \{ min\} = \mathcal{N}(u) \cup \Gamma^D $, so the number of bounded faces of $G$ is equal to the number of Neumann domains. We denote by $V$ the number of vertices of $G$, by $E$ the number of edges, and by $F$ the number of faces. We apply Euler's formula $V- E + F = 2$ (see e. g. \cite[Theorem 13.1]{wilson1979introduction}) for a planar connected graph $G$. Since $E \leqslant 2$ and $V = 3$, we have that $F \leqslant 3$. Consequently, the number of bounded faces of $G$ does not exceed two. So, this case is again impossible.

      (c) Both of the points $max, \, min$ are non-isolated in $\mathcal{N}(u)$. By Lemma \ref{lem_loc}, the set $\bigcup\limits_{r \in \mathcal{S}} \partial L(r) \setminus \Gamma^D$ consists of at most four integral curves $\gamma_1, \, \gamma_2, \, \gamma_3, \, \gamma_4$ each of which connects an extremum with a saddle point. Now consider a planar graph $G$ with vertices $max, \, min, \, p, \, q$ and edges $\gamma_1, \, \gamma_2,\, \gamma_3,\, \gamma_4, \, \Gamma_1, \, \Gamma_2$, where some of the edges $\gamma_1, \, \gamma_2,\, \gamma_3,\, \gamma_4,$ may be absent. Note that $G = \mathcal{N}(u) \cup \Gamma^D$, so the number of bounded faces of $G$ is equal to the number of Neumann domains. Again from Euler's formula for $G$ we obtain the inequality $F \leqslant 4$. Hence the number of Neumann domains does not exceed three.

\end{proof}

\section{Examples}
\label{examples}

\begin{example}
The first eigenfunction $u_1 = -sin( \pi x) sin(\pi y)$ of the Dirichlet problem for the unit square has four Neumann domains (Fig. \ref{fig2}). For domains without analytical boundary, the first eigenfunction can have more than one critical point. This may result in a function having more than one Neumann domain. In this case, the function $u_1$ has four saddle points on the boundary at the vertices of the square.
\end{example}

\begin{example}
For each $n \in \mathbb{N}$ and $a \in (\frac{1}{4}, \frac{1}{2})$, we define the domain $\Omega_{n,a}$ as follows,
$$\Omega_{n,a} := \{x \in \mathbb{R}^2 : a<r(x) < (1 + \frac{1}{2} cos(n \varphi(x))) \},$$
where $(r(x), \varphi(x))$ are the polar coordinates of the point $x$ (see Fig. \ref{fig2}). The boundary of the domain $\Omega_{n,a}$ consists of two curves, a circle of radius $a$, which we denote by $\Gamma_1$, and the curve $r(\varphi) = 1 + \frac{1}{2} cos(n \varphi)$, which we denote by $\Gamma_2$. So, the boundary of $\Omega_{n,a}$ is analytic. In Lemma \ref{max_localiz} and Lemma \ref{lemma_max_sym}, we show that the parameter $ a$ can be chosen such that the function $u_1(\Omega_{n, a})$ has at least $n$ local maxima in $\Omega_{n,a}$. Moreover these maxima pairwise do not  lie on the same curve of critical points.

\end{example}

\begin{figure}
    \centering

    \centering
    \subfloat[\centering ]{{\includegraphics[width=6cm]{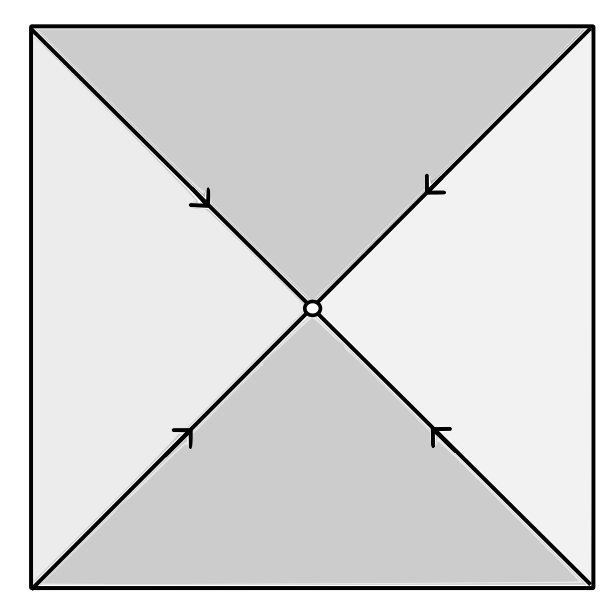} }}%
    \qquad
    \subfloat[\centering ]{{\includegraphics[width=6cm]{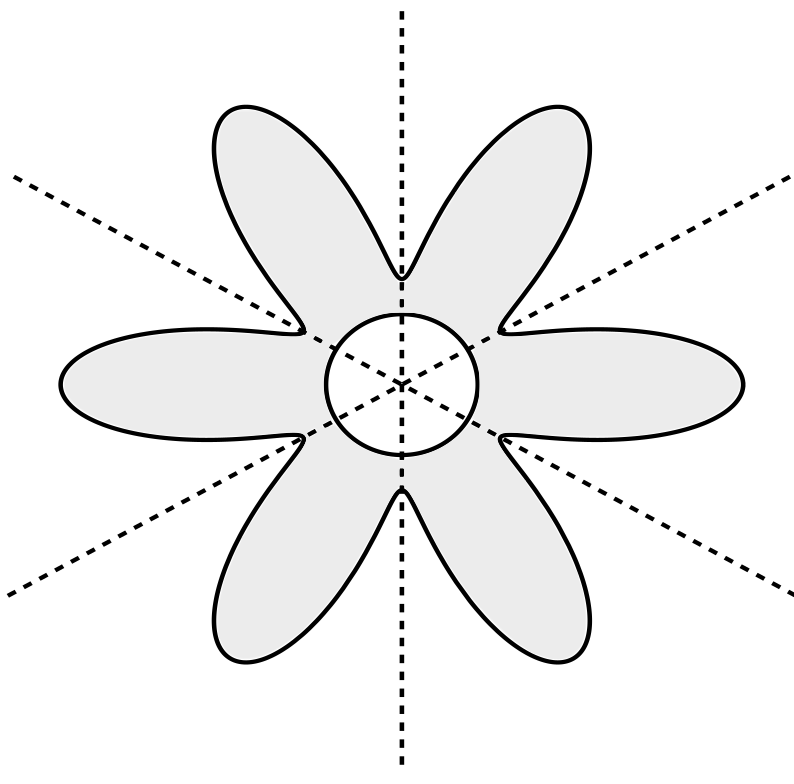} }}
    \caption{(A) Neumann domains of the first eigenfunction of a square. (B) The domain $\Omega_{6, 1/3}$ and  the set $L_6$.} 
    \label{fig2}

\end{figure}

For each $k= 1,..., n$, by $L_{n, k}$ we denote the ray with origin at $(0, 0)$ that forms an angle $\frac{\pi}{n} + \frac{2 \pi k}{n}$ with the positive direction of the $Ox$-axis. We also set $L_n = \cup_{k}L_{n, k}$. Let $x_{max}$ be the global maximum point of $u_1$ in the domain $\Omega_{n, a}$. We assume that $u_1$ is normalized such that $||u_1||_{\infty} = 1$ and $u_1(x_{max}) = 1$.

To prove Lemma \ref{max_localiz}, we need the estimate for $||\grad u_1||_{\infty}$ from the paper \cite{MR4172683}. In the notation introduced below, it is  taken into account that the Ricci curvature of the domain $\Omega_{n, a}$ is equal to zero. By $H_{\partial\Omega_{n,a}}$ we denote the mean curvature of the boundary of the domain $\Omega_{n,a}$. In this case, $H_{\partial\Omega_{n,a}}$ coincides with the curvature of a planar curve. Let $\theta$ be a non-negative constant such that $H_{\partial\Omega_{n,a}} \geq -\theta$. Let $A := \theta + \sqrt{\frac{2\lambda_1}{\pi}} \text{exp}(- \frac{\theta^2}{8 \lambda_1})$, where $\lambda_1 = \lambda_1(\Omega_{n, a})$. According to \cite[Theorem 1.1]{MR4172683}, for $\sqrt{\lambda_1} \leq 2 A $ we have
\begin{equation}
\label{norm_ine}
||\grad u_1||_{\infty} \leq \sqrt{e} (A + \frac{\lambda_1}{4 A}) ||u_1||_{\infty}.
\end{equation}

\begin{lemma}
\label{max_localiz}
   For every $n \in \mathbb{N}$ there exists $a \in (\frac{1}{4}, \frac{1}{2})$ such that for every $x \in \Omega_{n, a} \cap L_n$ we have $|u_1(x)| < 1$.
    \begin{proof}
Firstly, note that the domain $\Omega_{n,a}$ contains a ball of radius $C(n)$ for any  $a \in (\frac{1}{4}, \frac{1}{2})$. Indeed, consider the point $x_0 = (1, 0) \in \Omega_{n,a}$. We have $dist(x_0, \Gamma_1) > \frac{1}{2}$, and also $dist(x_0, \Gamma_2) > C(n)$, since $x_0 \notin \Gamma_2$. Therefore, a ball of the required radius can be found. Now we apply the monotonicity property for the Dirichlet eigenvalues (see e.g. \cite[Theorem 3.2.1]{LMP}) to the domains $B_{C(n)}(x_0) \subset\Omega_{n,a}$ and obtain $\lambda_1(\Omega_{n,a}) \leq \lambda_1(B_{C(n)}(x_0)) =: \Lambda(n)$.

There exists a constant $\theta(n)$ such that $|H_{\partial\Omega_{n,a}}| \leq \theta(n)$, for any $a \in (\frac{1}{4}, \frac{1}{2})$. Indeed, the curve $\Gamma_2$ is compact and does not depend on the parameter $a$, so $|H_{\Gamma_2}| \leq C_2(n)$, for some constant $C_2(n)$. The curve $\Gamma_1$ is a circle of radius $a \in (\frac{1}{4}, \frac{1}{2})$, so $|H_{\Gamma_1}| \leq 16$. Let $\theta(n) = \max (C_2(n), 16, \frac{1}{2}\sqrt{\Lambda(n)} )$.

   For this choice of $\theta(n)$, for any $a \in (\frac{1}{4}, \frac{1}{2})$, we have
$$ \sqrt{\lambda_1(\Omega_{n, a})} \leq \sqrt{\Lambda(n)} \leq 2 \theta(n) \leq 2 A(n, a).$$
Thus, the estimate (\ref{norm_ine}) holds. Also note that $\frac{1}{2} \sqrt{\Lambda(n)} \leq A(n, a) \leq \theta(n) + \sqrt{\Lambda(n)}$, and that $ \lambda_1(\Omega_{n, a}) \leq \Lambda(n)$ for any $a \in (\frac{1}{4}, \frac{1}{2})$.  Let $F(n)$  be  the maximum value of the function $\sqrt{e} (x + \frac{\Lambda(n)}{4 x})$ on the interval $[\frac{1}{2} \sqrt{\Lambda(n)}, \theta(n) + \sqrt{\Lambda(n)}]$. Thus, we obtain
$$ ||\grad u_1||_{\infty} \leq F(n) ||u_1||_{\infty} = F(n) .$$

Let $a$ be such that $(\frac{1}{2} - a) F(n) < 1$. Consider an arbitrary point  $x \in \Omega_{n, a} \cap L_n$, then $x \in \Omega_{n, a} \cap L_{n, k}$ for some $k$. Note that the set $\Omega_{n, a} \cap L_{n, k}$ is a segment of length $(\frac{1}{2} - a)$. Consider a point $y$ with polar coordinates $r = a, \, \varphi = \frac{\pi}{n} + \frac{2 \pi k}{n}$. Note that $y \in \partial\Omega_{n, a} \cap L_{n, k}$, hence $u_1(y) = 0$. Using
$$|u_1(x) - u_1(y)| \leq ||\grad u_1||_{\infty} \text{dist}(x, y).$$
we obtain that $|u_1(x)| \leq F(n)(\frac{1}{2} - a) < 1$.

    \end{proof}
\end{lemma}

Let $\Omega^n := \Omega_{n, a}$, where $a$ is chosen such that for any $x \in \Omega_{n, a} \cap L_n$ we have $|u_1(x)| < 1$.

\begin{lemma}
\label{lemma_max_sym}
In the domain $\Omega^n$ there exists local maxima $x_1, ..., x_n$. Moreover these maxima pairwise do not  lie on the same curve of critical points.

    \begin{proof}
Note that the domain $\Omega_{n,a}$ is invariant under rotations by $\frac{2 \pi k}{n}$. Since the first eigenvalue is simple (see, e.g., \cite[Corollary 4.1.32]{LMP}), we have that the function $u_1$ is also invariant under rotations by these angles. Put $x_k = R_{\frac{2 \pi k}{n}}(x_{max})$, where $R_\varphi$ is the rotation operator by an angle $\varphi$ about the origin. The points $x_k$ are local maxima. Note that $\Omega_{n, a} \setminus L_n$ consists of $n$ connected components $\mathcal{A}_1, ..., \mathcal{A}_n$. Using $u_1(x_k) = 1$ and the definition of $\Omega^n$, we obtain that $x_k \notin L_n$. So, after renumbering the connected components if necessary, we can assume that $x_k \in \mathcal{A}_k$.
         
If  $x_i \in \theta_i$, where $\theta_i$ is  a curve of critical points, then $\theta_i \subset \mathcal{A}_i$. Otherwise, there exists a point $x \in \theta_i \cap \partial\mathcal{A}_i$. But $\partial\mathcal{A}_i \subset \partial \Omega^n \cup L_n$, so, by construction, $|u_1(x)| < 1$. This contradicts with the fact that $u_1$ is constant on the curve $\theta_i$, see \cite[Lemma 2.13 (iii)]{anoop2024neumanndomainsplanaranalytic}, since $u_1(x_i) = 1, \, |u_1(x)| < 1$.
        \end{proof}
\end{lemma}

\section*{Acknowledgements}
The work was supported by the Theoretical Physics and Mathematics
Advancement Foundation «BASIS» Stipend (Student) 24-8-2-19-1 and 25-8-2-18-1.

The author would like to thank A. V. Penskoi for attracting attention to this problem and valuable discussions.

\bibliographystyle{alpha}
\bibliography{bibl.bib}

@article{ASNSP_1992_4_19_4_567_0,
     author = {Alessandrini, G. and Magnanini, R.},
     title = {The index of isolated critical points and solutions of elliptic equations in the plane},
     journal = {Annali della Scuola Normale Superiore di Pisa - Classe di Scienze},
     pages = {567--589},
     publisher = {Scuola normale superiore},
     volume = {Ser. 4, 19},
     number = {4},
     year = {1992},
     mrnumber = {1205884},
     zbl = {0793.35021},
     language = {en},
     url = {http://www.numdam.org/item/ASNSP_1992_4_19_4_567_0/}
}

@misc{anoop2024neumanndomainsplanaranalytic,
      title={Neumann domains of planar analytic eigenfunctions}, 
      author={T. V. Anoop and Vladimir Bobkov and Mrityunjoy Ghosh},
      year={2024},
      eprint={2410.07811},
      note={arXiv preprint  	arXiv:2410.07811},
      archivePrefix={arXiv},
      primaryClass={math.AP},
      url={https://arxiv.org/abs/2410.07811}, 
}

@article{article,
author = {Cabre, Xavier and Chanillo, Sagun},
year = {1998},
month = {03},
pages = {1-10},
title = {Stable solutions of semilinear elliptic problems in convex domains},
volume = {4},
journal = {Selecta Mathematica},
doi = {10.1007/s000290050022}
}

@article {MR1259610,
    AUTHOR = {Alessandrini, Giovanni},
     TITLE = {Nodal lines of eigenfunctions of the fixed membrane problem in
              general convex domains},
   JOURNAL = {Comment. Math. Helv.},
  FJOURNAL = {Commentarii Mathematici Helvetici},
    VOLUME = {69},
      YEAR = {1994},
    NUMBER = {1},
     PAGES = {142--154},
      ISSN = {0010-2571},
   MRCLASS = {35P05 (35J05)},
  MRNUMBER = {1259610},
MRREVIEWER = {Gianfranco Bottaro},
       DOI = {10.1007/BF02564478},
       URL = {https://doi.org/10.1007/BF02564478},
}

@article {MR3535866,
    AUTHOR = {Band, Ram and Fajman, David},
     TITLE = {Topological properties of {N}eumann domains},
   JOURNAL = {Ann. Henri Poincar\'{e}},
  FJOURNAL = {Annales Henri Poincar\'{e}. A Journal of Theoretical and
              Mathematical Physics},
    VOLUME = {17},
      YEAR = {2016},
    NUMBER = {9},
     PAGES = {2379--2407},
      ISSN = {1424-0637},
   MRCLASS = {58J50 (35A16 35P05 35R01)},
  MRNUMBER = {3535866},
MRREVIEWER = {Leonid Friedlander},
       DOI = {10.1007/s00023-016-0468-7},
       URL = {https://doi.org/10.1007/s00023-016-0468-7},
}

@book {MR2145196,
    AUTHOR = {Banyaga, Augustin and Hurtubise, David},
     TITLE = {Lectures on {M}orse homology},
    SERIES = {Kluwer Texts in the Mathematical Sciences},
    VOLUME = {29},
 PUBLISHER = {Kluwer Academic Publishers Group, Dordrecht},
      YEAR = {2004},
     PAGES = {x+324},
      ISBN = {1-4020-2695-1},
   MRCLASS = {58E05 (53D40 57R99)},
  MRNUMBER = {2145196},
MRREVIEWER = {Michael J. Usher},
       DOI = {10.1007/978-1-4020-2696-6},
       URL = {https://doi.org/10.1007/978-1-4020-2696-6},
}

@book {LMP,
    AUTHOR = {Levitin, Michael and Mangoubi, Dan and Polterovich, Iosif},
     TITLE = {Topics in spectral geometry},
    SERIES = {Graduate Studies in Mathematics},
    VOLUME = {237},
 PUBLISHER = {American Mathematical Society, Providence, RI},
      YEAR = {[2023] \copyright 2023},
     PAGES = {xviii+325},
      ISBN = {[9781470475253]; [9781470475482]; [9781470475499]},
   MRCLASS = {35-01 (35Pxx 47A75 58J50 58J53 65N30)},
  MRNUMBER = {4655924},
MRREVIEWER = {Yoonweon Lee},
       DOI = {10.1090/gsm/237},
       URL = {https://doi.org/10.1090/gsm/237},
}

@article {MR1480548,
    AUTHOR = {Hoffmann-Ostenhof, M. and Hoffmann-Ostenhof, T. and
              Nadirashvili, N.},
     TITLE = {The nodal line of the second eigenfunction of the {L}aplacian
              in {${\bf R}^2$} can be closed},
   JOURNAL = {Duke Math. J.},
  FJOURNAL = {Duke Mathematical Journal},
    VOLUME = {90},
      YEAR = {1997},
    NUMBER = {3},
     PAGES = {631--640},
      ISSN = {0012-7094},
   MRCLASS = {35P15 (35J05)},
  MRNUMBER = {1480548},
MRREVIEWER = {B. Hellwig},
       DOI = {10.1215/S0012-7094-97-09017-7},
       URL = {https://doi.org/10.1215/S0012-7094-97-09017-7},
}

@article {MR3151083,
    AUTHOR = {McDonald, Ross B. and Fulling, Stephen A.},
     TITLE = {Neumann nodal domains},
   JOURNAL = {Philos. Trans. R. Soc. Lond. Ser. A Math. Phys. Eng. Sci.},
  FJOURNAL = {Philosophical Transactions of the Royal Society of London.
              Series A. Mathematical, Physical and Engineering Sciences},
    VOLUME = {372},
      YEAR = {2014},
    NUMBER = {2007},
     PAGES = {20120505, 6},
      ISSN = {1364-503X},
   MRCLASS = {81Q10},
  MRNUMBER = {3151083},
       DOI = {10.1098/rsta.2012.0505},
       URL = {https://doi.org/10.1098/rsta.2012.0505},
}

@article {MR4668090,
    AUTHOR = {Band, Ram and Cox, Graham and Egger, Sebastian K.},
     TITLE = {Defining the spectral position of a {N}eumann domain},
   JOURNAL = {Anal. PDE},
  FJOURNAL = {Analysis \& PDE},
    VOLUME = {16},
      YEAR = {2023},
    NUMBER = {9},
     PAGES = {2147--2171},
      ISSN = {2157-5045},
   MRCLASS = {58J50 (35J05 35P05 35R01 37D10 58C40)},
  MRNUMBER = {4668090},
MRREVIEWER = {G\"{u}nter Berger},
       DOI = {10.2140/apde.2023.16.2147},
       URL = {https://doi.org/10.2140/apde.2023.16.2147},
}

@article {MR4244877,
    AUTHOR = {Band, Ram and Egger, Sebastian K. and Taylor, Alexander J.},
     TITLE = {The spectral position of {N}eumann domains on the torus},
   JOURNAL = {J. Geom. Anal.},
  FJOURNAL = {Journal of Geometric Analysis},
    VOLUME = {31},
      YEAR = {2021},
    NUMBER = {5},
     PAGES = {4561--4585},
      ISSN = {1050-6926},
   MRCLASS = {58C40 (35P05 58J50)},
  MRNUMBER = {4244877},
MRREVIEWER = {Mohammed El A\"{\i}di, Universidad Nacional de Colombia},
       DOI = {10.1007/s12220-020-00444-9},
       URL = {https://doi.org/10.1007/s12220-020-00444-9},
}

@incollection {MR3087922,
    AUTHOR = {Zelditch, Steve},
     TITLE = {Eigenfunctions and nodal sets},
 BOOKTITLE = {Surveys in differential geometry. {G}eometry and topology},
    SERIES = {Surv. Differ. Geom.},
    VOLUME = {18},
     PAGES = {237--308},
 PUBLISHER = {Int. Press, Somerville, MA},
      YEAR = {2013},
   MRCLASS = {58J50 (35P05 35R01 53Dxx)},
  MRNUMBER = {3087922},
       DOI = {10.4310/SDG.2013.v18.n1.a7},
       URL = {https://doi.org/10.4310/SDG.2013.v18.n1.a7},
}

@book{wilson1979introduction,
  title={Introduction to graph theory},
  author={Wilson, Robin J},
  year={1979},
  publisher={Pearson Education India}
}

@article{MR4172683,
  title={Gradient estimates on Dirichlet and Neumann eigenfunctions},
  author={Arnaudon, Marc and Thalmaier, Anton and Wang, Feng-Yu},
  journal={International Mathematics Research Notices},
  volume={2020},
  number={20},
  pages={7279--7305},
  year={2020},
  publisher={Oxford University Press}
}

@article{DEREGIBUS2022109496,
title = {On the number of critical points of the second eigenfunction of the Laplacian in convex planar domains},
journal = {Journal of Functional Analysis},
volume = {283},
number = {1},
pages = {109496},
year = {2022},
issn = {0022-1236},
doi = {https://doi.org/10.1016/j.jfa.2022.109496},
url = {https://www.sciencedirect.com/science/article/pii/S0022123622001161},
author = {Fabio {De Regibus} and Massimo Grossi}
}

\end{document}